\newcommand{\R}{\mathbb{R}}
\newcommand{\inv}{^{-1}}
\newcommand{\bd}{\partial}
\newcommand{\eval}{\bigg\vert}
\newcommand{\RP}{\mathbb{RP}}
\newcommand{\vol}{\operatorname{Vol}}
\newcommand{\area}{\operatorname{Area}}
\newcommand{\eps}{\varepsilon}
\newcommand{\grad}{\nabla}
\newcommand{\lap}{\Delta}
\theoremstyle{plain}
\newtheorem{theorem}{Theorem}
\newtheorem{corollary}[theorem]{Corollary}
\newtheorem{prop}[theorem]{Proposition}
\newtheorem{lem}[theorem]{Lemma}
\newtheorem{thm}{Theorem}
\newtheorem{propA}[thm]{Proposition}
\newtheorem{rethm}{Theorem}
\newtheorem{repropA}[rethm]{Proposition}
\newtheorem{rerethm}{Theorem}
\theoremstyle{definition}
\newtheorem{defn}[theorem]{Definition}
\newtheorem{rem}[theorem]{Remark}
\newtheorem{question}{Question}
\newtheorem*{yamabe}{The Yamabe Problem} 
\title{On the Stability of the Yamabe Invariant of $S^3$}
\author{Liam Mazurowski}
\address{Department of Mathematics \\ Cornell University \\ Ithaca, NY, 14850}
\email{lmm334@cornell.edu}
\author{Xuan Yao}
\address{Department of Mathematics \\ Cornell University \\ Ithaca, NY, 14850}
\email{xy346@cornell.edu}
\begin{document}
\maketitle

\begin{abstract}
Let $g$ be a complete, asymptotically flat metric on $\R^3$ with vanishing scalar curvature. Moreover, assume that $(\R^3,g)$ supports a nearly Euclidean $L^2$ Sobolev inequality. We prove that $(\R^3,g)$ must be close to Euclidean space with respect to the $d_p$-distance defined by Lee-Naber-Neumayer. We then discuss some consequences for the stability of the Yamabe invariant of $S^3$. More precisely, we show that if such a manifold $(\R^3,g)$ carries a suitably normalized, positive solution to $\lap_g w + \lambda w^5 = 0$ then $w$ must be close, in a certain sense, to a conformal factor that transforms Euclidean space into a round sphere.
\end{abstract}

\section{Introduction} 

The classical uniformization theorem implies that every closed Riemann surface $\Sigma$ admits a conformal metric of constant Gaussian curvature.  In higher dimensions, there are known obstructions for a smooth manifold $M$ to admit a metric of constant sectional curvature or constant Ricci curvature. However, one can still ask if every smooth manifold $M$ admits a metric of constant scalar curvature. In the 1960s, Yamabe \cite{yamabe1960deformation} claimed that in fact every closed Riemannian manifold $(M^n,g)$ with dimension $n\ge 3$ admits a conformal metric $\bar g = e^{2\varphi}g$ with constant scalar curvature.  However, Trudinger \cite{trudinger1968remarks} later discovered a gap in Yamabe's proof.  The question of whether a given $(M,g)$ always admits a conformal metric of constant scalar curvature is now known as the Yamabe problem. 

\begin{yamabe} Does every closed Riemannian manifold $(M,g)$ admit a conformal metric with constant scalar curvature?
\end{yamabe}

To understand further developments in the Yamabe problem, it is important to introduce the so-called Yamabe quotient. Given a closed Riemannian manifold $(M^n,g)$, define 
\[
Y(M,[g]) = \inf\left\{\frac{\int_M a \vert \grad u\vert^2 + Ru^2\, dv}{\left(\int_M u^q \, dv\right)^{2/q}}: u\in C^\infty(M),\ u>0\right\}
\]
where 
\[
a = \frac{4(n-1)}{n-2} \quad\text{and}\quad q = \frac{2n}{n-2}
\]
are dimensional constants. 
It is known that $Y(M,[g])$ depends only on the conformal class of the metric $g$. 

Trudinger \cite{trudinger1968remarks} was able to fix the gap in Yamabe's proof under the extra assumption that $Y(M,[g])\le 0$.  In other words, Trudinger proved that if $Y(M,[g]) \le 0$, then there is a metric $\bar g$ conformal to $g$ with constant scalar curvature. Later, Aubin \cite{aubin1976equations} proved that the inequality
\begin{equation}
\label{ineq1}
Y(M^n,[g]) \le Y(S^n,[g_{\text{round}}])
\end{equation}
always holds. Moreover, Aubin 
proved that if this inequality is strict, then there is a metric of constant scalar curvature in the conformal class of $g$. Aubin was able to show that strict inequality holds in (\ref{ineq1}) provided that $n \ge 6$ and $g$ is not locally conformally flat. Finally, Schoen \cite{schoen1984conformal} proved that strict inequality holds in all remaining cases and thus completed the affirmative resolution of the Yamabe problem. For more details on the Yamabe problem, see the excellent survey of Lee and Parker \cite{lee1987yamabe}.

It is worth examining Schoen's argument in more detail since it serves as important motivation for the results in this paper. From now on, we will focus attention on the 3-dimensional case. Let $(M^3,g)$ be a three dimensional Riemannian manifold. One can assume that $Y(M,[g]) > 0$ as otherwise there is nothing to prove. The conformal laplacian is the differential operator 
\[
L = -8\lap_M + R.
\]
Since $Y(M,[g]) > 0$, it is known that $L$ admits a positive Green's function $\Gamma$ with a pole at some fixed point $x\in M$.  Schoen observed that the manifold $(M-\{x\},\Gamma^4 g)$ is complete and asymptotically flat with vanishing scalar curvature.  We will call this manifold a {\it Yamabe blow up model} for $M$.   The positive mass theorem \cite{schoen1979proof} implies that $(M-\{x\},\Gamma^4 g)$ has positive ADM mass. This in turn gives crucial information about the asymptotics of $\Gamma$ near the pole. Schoen was able to exploit this information to construct a test function witnessing that $Y(M,[g])<Y(S^3,[g_{\text{round}}])$. 

Schoen's result shows that inequality (\ref{ineq1}) is {\it rigid}: if equality holds in (\ref{ineq1}) then $(M^3,g)$ is conformal to the round 3-sphere. Given this rigidity, it is natural to inquire about the {\it stability} of inequality (\ref{ineq1}). Namely, if $(M^3,g)$ almost achieves equality in (\ref{ineq1}), does it follow that $(M,g)$ is close to being conformal to a round 3-sphere in some sense? It is known that topological stability does not hold. Indeed, Kobayashi \cite{kobayashi1987scalar} and Schoen \cite{schoen2006variational} have constructed metrics $g_i$ on $S^2\times S^1$ such that $Y(S^2\times S^1,[g_i])\to Y(S^3,[g_{\text{round}}])$. Thus we will focus on the case where the underlying manifold $M$ is assumed to be $S^3$. 

\begin{question}[Stability of the Yamabe Invariant] 
\label{yamabestable} Assume that $g$ is a metric on $S^3$ such that 
\[
Y(S^3,[g_{\text{round}}]) - Y(S^3,[g]) < \eta
\]
for some small $\eta > 0$. Does it follow that some metric in the conformal class of $g$ is close to round in some sense?
\end{question}

In light of Schoen's argument, we expect that stability for the Yamabe invariant should be closely related to the stability of the blow up models. The blow up model for round $S^3$ is Euclidean $\R^3$. Now assume that $(S^3,g)$ nearly achieves equality in $(\ref{ineq1})$.  Then the blow up model $(S^3-\{x\},\Gamma^4 g)$ is a complete, asymptotically flat manifold with vanishing scalar curvature. Moreover, by conformal invariance, the near equality in (\ref{ineq1}) implies that the blow up model supports a nearly Euclidean $L^2$ Sobolev inequality. In other words, 
\[
\frac{\int_{S^3-\{x\}} \vert \grad u\vert^2\, dv}{\left(\int_{S^3-\{x\}} u^6\, dv\right)^{1/3}} \ge \Lambda - \delta 
\]
for all smooth functions $u$ vanishing in a neighborhood of $x$. Here 
$\Lambda = 3(\frac \pi 2)^{4/3}$ 
is the optimal $L^2$ Sobolev constant on Euclidean space, $\delta > 0$ is very small, and all geometric quantities in the integrals are computed with respect to $\Gamma^4 g$. Based on this, we pose the following stability question for Yamabe blow up models of $S^3$.  

\begin{question} [Stability of Yamabe Blow Up Models]
\label{blowupstable}
Assume that $(M,g)$ is a complete, asymptotically flat 3-manifold with vanishing scalar curvature. Assume that $M$ is topologically $\R^3$. Also assume that $(M,g)$ supports a nearly Euclidean $L^2$ Sobolev inequality:
\begin{equation}
\label{ineq2}
\frac{\int_M \vert \grad u\vert^2\, dv}{\left(\int_M u^6\, dv\right)^{1/3}} \ge \Lambda - \delta, \quad \forall u\in W^{1,2}(M).
\end{equation}
If $\delta > 0$ is small, then does $M$ have to be close to Euclidean space in some sense? 
\end{question}

{ We will begin by addressing Question \ref{blowupstable}, and will return later to discuss the extent to which an answer to Question \ref{blowupstable} gives an answer to Question \ref{yamabestable}.}  
To say more about Question \ref{blowupstable}, an important first step is to decide in what sense we can expect $M$ to be close to Euclidean. To motivate our choice, we first discuss what is known about a number of other stability questions related to scalar curvature. 

\subsection{Scalar Curvature Stability Results} Many {\it rigidity} results are known in the study of scalar curvature. We cannot hope to give an exhaustive list, but we mention the following three examples. 

\begin{itemize}

\item[(i)] {\bf The Positive Mass Theorem}:  A manifold $(M^n,g)$ is called asymptotically flat if, loosely speaking, there is a compact set $K\subset M$ such that $M- K$ is diffeomorphic to $\R^n-B$ and, in the coordinates given by this diffeomorphism, the metric on $M$ approaches Euclidean at a certain rate near infinity.  Motivated by physical considerations, Arnowitt-Deser-Misner \cite{arnowitt1961coordinate} associated to any such $M$ a number $m_{\text{ADM}}$ called the mass. The mass is a measure of how quickly the metric decays to Euclidean near infinity. The positive mass theorem asserts that for any asymptotically flat manifold $M$ with non-negative scalar curvature one has 
\[
m_{\text{ADM}}(M) \ge 0,
\]
and moreover, that equality holds if and only if $M$ is isometric to $\R^n$. This was first proven in dimension $3\le n\le 7$ by Schoen and Yau \cite{schoen1979proof} using minimal surface techniques. Later, Witten \cite{witten1981new} gave a proof that works in any dimension assuming that the manifold $M$ is spin. \\

\item[(ii)] {\bf Llarull's Theorem}: Llarull \cite{llarull1998sharp} proved that if $g$ is any metric on $S^n$ satisfying $g\ge g_{\text{round}}$ and $R\ge n(n-1)$ then $g$ must be round.  \\

\item[(iii)] {\bf The Geroch Conjecture}: Geroch conjectured that there is no metric of positive scalar curvature on a torus $T^n$. Moreover, any metric on $T^n$ with non-negative scalar curvature must be flat. In dimension 2, this is obvious from the Gauss-Bonnet theorem. In dimensions $3\le n \le 7$, Schoen and Yau \cite{schoen1987structure} again gave a proof using minimal surface methods. Gromov and Lawson \cite{gromov1983positive} gave a proof that works in any dimension using Dirac operator methods. 
\end{itemize}

Recently, a number of authors have investigated the associated {\it stability} questions. In the case of the positive mass theorem, one asks whether an asymptotically flat manifold $(M,g)$ with non-negative scalar curvature and ADM mass close to zero must be close to Euclidean space. 
In general, manifolds with non-negative scalar curvature may contain long thin splines. Even worse, they may contain ``other worlds,'' which are nearly arbitrary regions separated from the rest of manifold by thin necks.
These examples show that one cannot expect stability with respect to the Gromov-Hausdorff topology. 

This motivated Sormani and Wenger \cite{sormani2011intrinsic} to define the intrinsic flat topology, which effectively ignores splines. Lee and Sormani \cite{lee2014stability} proved stability of the positive mass theorem in the intrinsic flat topology for rotationally symmetric metrics with outermost minimal boundary. In our case, we will see that the Sobolev inequality (\ref{ineq2}) implies good isoperimetric control, and so splines and other worlds cannot form. Hence there is no need to use the intrinsic flat topology in our setting.
More recently, Dong and Song \cite{dong2023stability} proved stability for the positive mass theorem in the sense of Gromov-Hausdorff convergence away from a bad set whose boundary has small area. 
Again, this does not seem like a suitable mode of convergence to study the above question. Indeed, inequality (\ref{ineq2}) is scale invariant, and so we can always scale down $M$ so that all the interesting information is contained in a very small set. 

For Llarull's theorem, stability asks whether a metric $g$ on $S^n$ with $g\ge g_{\text{round}}$ and $R(g) \ge n(n-1)(1-\eps)$ must be close to round in some sense. Gromov deemed this the spherical stability problem. Allen-Bryden-Kazaras \cite{allen2023stability} have recently proven intrinsic flat stability for the 3-dimensional spherical stability problem, assuming some extra control over the diameter, volume, and isoperimetric constant. It is worth mentioning that harmonic functions (and more generally spacetime harmonic functions) play an important role in the stability results for the positive mass theorem and Llarull's theorem. In this regard, Dong and Song's result relies on the work of Bray-Kazaras-Khuri-Stern \cite{bray2022harmonic}, and the Allen-Bryden-Kararas result relies on the work of Hirsch-Kazaras-Khuri-Zhang \cite{hirsch2022rigid}. The proof of our main result will also use harmonic functions, albeit in a slightly different way. 

In the case of the Geroch conjecture, the stability problem asks whether a metric $g$  on $T^n$ with $R(g)\ge -\eps$ for some small $\eps >0$ must be close to a flat torus. The formation of splines and other worlds must still be taken into account in this setting. Sormani \cite{sormani2023conjectures} conjectured that if $g$ additionally satisfies the so-called min-A condition, which is a weak form of non-collapsing, then $(T^n,g)$ should be close to flat in the intrinsic flat topology. However, Lee-Naber-Neumayer \cite{lee2023dp} showed that in dimension $n\ge 4$ there are tori $(T^n,g)$ with unit volume  and $R(g) \ge -\eps$, but with very tiny diameter, even in the presence of a strong non-collapsing condition.  Similar examples were then constructed in dimension 3 by Kazaras and Xu \cite{kazaras2023drawstrings}, who refer to them as ``drawstrings.'' 

In light of this phenomenon, Lee-Naber-Neumayer \cite{lee2023dp} introduced the notion of $d_p$ distance.  For a Riemannian manifold $(M^n,g)$ and a fixed value $p>n$, this distance is defined as follows. 
\begin{defn}

The $d_p$ distance between two points $x,y\in M$ is given by 
\[
d_p(x,y) = \sup\left\{\vert f(x)-f(y)\vert: \int_M \vert \grad f\vert^p\, dv \le 1,\ f\in W^{1,p}_{\text{loc}}(M) \right\}. 
\]
This is in fact an honest distance function and, for fixed $M$, it converges to the usual geodesic distance on $M$ as $p\to \infty$. Let $\mathcal B_{p,g}(x,r)$ denote the $d_p$ ball of radius $r$ in $M$ centered at $x$.  
\end{defn}

 While the above drawstring-type examples do not behave well with respect to Gromov-Hausdorff convergence, Lee-Naber-Neumayer showed that they do have nice $d_p$-limits.  More generally, they proved that the $d_p$ distance provides a good framework for studying the convergence of manifolds with almost non-negative scalar curvature and almost Euclidean entropy. Their results imply, for example, that a sequence of complete manifolds $(M_i,g_i)$ with $R_i \ge -\eps_i$ and  $\nu(g_i,2)\ge -\eps_i$ where $\eps_i\to 0$ converges in the $d_p$ sense to Euclidean space for all large $p$. Here $\nu$ denotes Perelman's $\nu$-functional and the condition $\nu(g_i,2)\ge -\eps_i$ represents a kind of strong non-collapsing condition. Since Perelman's $\nu$-functional is closely related to  optimal Sobolev inequalities, it therefore seems reasonable to expect $d_p$ convergence in our setting. 

\subsection{Main Results} We first prove a stability type theorem for Yamabe blow up models of $S^3$ with respect to $d_p$ convergence. 

\begin{defn}
Let $(M_i,g_i)$ be a sequence of complete, asymptotically flat 3-manifolds such that each $M_i$ is topologically $\R^3$. Fix a number $p\in (3,\infty)$. Then $(M_i,g_i)$ converges to Euclidean space in the $d_p$ sense if for all $x_i \in M_i$ and all fixed $r > 0$ we have 
\[
d_{\text{GH}}\bigg((\mathcal B_{p,g_i}(x_i,r),d_{p,g_i}),(\mathcal B_{p,g_{\text{euc}}}(0,r), d_{p,\text{euc}})\bigg) \to 0, \quad \text{ as } i\to\infty
\]
and, moreover, 
\[
\vol_{g_i}(\mathcal B_{p,g_i}(x_i,r)) \to { \vol_{g_{\text{euc}}}(\mathcal B_{p,g_{\text{euc}}}(0,r))}, \quad\text{ as } i\to \infty. 
\]
Here $d_{\text{GH}}$ denotes the Gromov-Hausdorff distance between metric spaces.  Note that we do not need to introduce $\text{Cov}(x,N)$ as in \cite{lee2023dp} Definition 2.44 since the spaces we work with are asymptotically flat, and therefore $d_p$ balls have compact closure. 
\end{defn} 

\begin{thm}
\label{main}
Assume that $(M_i,g_i)$ is a sequence of complete, asymptotically flat 3-manifolds with vanishing scalar curvature. Assume that each $M_i$ is topologically $\R^3$. Further suppose that 
\[
\inf\left\{\frac{\int_{M_i} \vert \grad u\vert^2\, dv}{\left(\int_{M_i} u^6\, dv\right)^{1/3}}: u\in W^{1,2}(M_i)\right\} \ge \Lambda - \delta_i
\]
where $\delta_i\to 0$. Then $M_i$ converges to Euclidean space in the $d_p$ sense for all $p\in(3,\infty)$. 
\end{thm}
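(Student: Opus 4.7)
\emph{Proof proposal.} My plan is to combine three ingredients: (i) upgrade the Sobolev deficit to smallness of the ADM mass via an inverse of Schoen's test-function construction, (ii) use three asymptotically Euclidean harmonic coordinate functions together with the Bray--Kazaras--Khuri--Stern (BKKS) integral identity to obtain sharp integral control on their Hessians, and (iii) convert this integral control into $d_p$-closeness to Euclidean space.

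First I would show that the ADM masses $m_i := m_{\mathrm{ADM}}(M_i, g_i)$ tend to zero. Arguing by contradiction along a subsequence with $m_i \ge c>0$, Schoen's Green's function--based construction for the Yamabe problem produces on each $M_i$ a compactly supported test function whose Sobolev quotient is strictly less than $\Lambda$ by a definite amount depending on $c$; this contradicts $\delta_i \to 0$. The inequality (\ref{ineq2}) also provides good isoperimetric control, so long thin necks or ``other worlds'' cannot form, which is why $d_p$ convergence (rather than only intrinsic flat convergence) is a reasonable target.

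Next, fix Euclidean coordinates in the asymptotic region of each $M_i$, and for $\alpha \in \{1,2,3\}$ solve for a harmonic function $u_i^{\alpha}$ on $(M_i, g_i)$ with asymptotics $u_i^{\alpha} = x^{\alpha} + O(|x|^{-1})$; existence is standard AF elliptic theory, using $R\equiv 0$ and the positive Sobolev constant. Applying the BKKS identity to $u_i^{\alpha}$, with $R\equiv 0$, yields
\[
\int_{M_i} \frac{|\nabla^2 u_i^{\alpha}|^2}{|\nabla u_i^{\alpha}|}\, dv_{g_i} \le C\, m_i \longrightarrow 0.
\]
Together with the normalization at infinity and integration by parts, this should upgrade to $L^p_{\mathrm{loc}}$-smallness of $\langle \nabla u_i^{\alpha}, \nabla u_i^{\beta}\rangle - \delta^{\alpha \beta}$, so that $\Phi_i := (u_i^1, u_i^2, u_i^3)\colon M_i \to \R^3$ is nearly an isometric map in an averaged sense.

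To conclude $d_p$ convergence, I would push Lipschitz test functions back and forth through $\Phi_i$: a competitor $f$ on $\R^3$ with $\int |\nabla f|^p \le 1$ gives rise to $f\circ \Phi_i$ on $M_i$ with a Sobolev norm close to $1$, and vice versa through an approximate inverse, with distortion controlled by $\||\Phi_i^\ast g_{\mathrm{euc}} - g_i|\|_{L^{p'}}$. Coupled with volume comparison---itself a consequence of the nearly optimal Sobolev inequality together with the mass smallness---this yields the Gromov--Hausdorff closeness of $d_p$-balls and convergence of their volumes.

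The main obstacle, I expect, lies in the last two paragraphs: the weight $|\nabla u_i^{\alpha}|^{-1}$ in the BKKS bound degenerates exactly on the critical set of $u_i^{\alpha}$, so the inequality gives essentially no pointwise control there. If such ``bad sets'' persisted at macroscopic scale, $\Phi_i$ would fail to be a quasi-isometry and ordinary Gromov--Hausdorff convergence would break. This is precisely where the $d_p$ framework of Lee--Naber--Neumayer is essential: one only needs to show that the degenerate region has small $p$-capacity so that it is invisible to $d_p$-distances, rather than ruling it out entirely. Quantifying this via an $\eps$-regularity argument for harmonic functions and patching the near-coordinate estimates across the bad set is, I anticipate, where most of the technical work will go.
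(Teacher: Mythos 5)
There is a decisive gap at the very first step of your plan, and it is one the paper itself explicitly flags. You propose to upgrade the Sobolev deficit $\delta_i$ to smallness of the ADM masses $m_i$ by running Schoen's test-function construction in reverse. But the Sobolev quotient is scale-invariant while the ADM mass is not: if $(M,g)$ has Sobolev deficit $\delta$ and mass $m$, then $(M,\lambda^2 g)$ has the same Sobolev deficit $\delta$ but mass $\lambda m$. Hence $\delta_i\to 0$ cannot possibly force $m_i\to 0$ (nor even $m_i$ bounded). Concretely, the implication ``$m_{\mathrm{ADM}}\ge c$ gives a test function whose quotient is $\le \Lambda - \phi(c)$'' fails, because the deficit produced by Schoen's concentrating bubbles is roughly $m\cdot\eps$ where $\eps$ is the concentration scale, and this vanishes as $\eps\to 0$; the two sides of your implication do not even have the same scaling behavior. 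This removes the input to both your BKKS step and your harmonic-coordinate step, so the rest of the argument has nothing to feed on. (The degeneracy of the $|\nabla u|^{-1}$ weight that you correctly worry about is a secondary concern compared to this.)

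The paper's route is structurally different and is built precisely to respect scale invariance. It never estimates the ADM mass. Instead: (1) it converts the almost-optimal Sobolev inequality into an almost-optimal Willmore inequality $\int_\Sigma H^2\,da\ge 16\pi(1-\eps)^2$ for all closed connected embedded surfaces, by transporting Aubin-type test functions across the level sets of the capacitary potential of $\Sigma$ and using Miao's Gauss--Bonnet-based monotonicity (which is where $M\cong\R^3$ enters, to keep level sets connected); (2) it feeds this into the ODE for the isoperimetric profile to get $I_M\ge(1-2\eps)^{2/3}I_{\mathrm{euc}}$; (3) it upgrades isoperimetric control to a lower bound on Perelman's entropy $\mu(g,\tau)$ via a P\'olya--Szeg\H{o}-type rearrangement; and (4) it invokes the Lee--Naber--Neumayer $\eps$-regularity theorem, which takes exactly $R\ge 0$ plus almost-Euclidean $\nu$-entropy and returns $d_p$ closeness. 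Every quantity in this chain (Willmore energy, isoperimetric ratio, $\nu$-entropy, $d_p$ distance at unit $d_p$-scale) is scale-invariant, which is why the argument closes where the mass-based one cannot.
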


\begin{rem}
We do not know whether the $d_p$ convergence in Theorem \ref{main} is optimal. The counter-examples to Gromov-Hausdorff convergence constructed in \cite{lee2023dp} all have small but negative scalar curvature at some points, and hence they do not occur as Yamabe blow up models. 
\end{rem}

\begin{rem}
In principle, one also expects the stability of Yamabe blow-up models for $S^n$ when $n\ge 4$. Theorem \ref{main} is restricted to dimension three because our argument relies in a crucial way on certain monotonicity formulas that are proven using the Gauss-Bonnet theorem. 
\end{rem}

Of course, one would ultimately like to prove stability for the Yamabe invariant itself, and not just the blow up models.  To this end, note that a Yamabe blow-up model $(M^3,g)$ carries a conformal factor that ``undoes'' the blow up procedure. We are able to show that these conformal factors on the blow up models converge to a conformal factor that transforms Euclidean $\R^3$ into round $S^3$. 

More precisely, assume that $\bar g_i$ is a sequence of metrics on $S^3$ and that 
\[
Y(S^3,[g_{\text{round}}]) - Y(S^3,[\bar g_i]) \to 0
\]
as $i\to \infty$. By the resolution of the Yamabe problem, after replacing $\bar g_i$ by a conformal metric if necessary, we can suppose that $\vol(\bar g_i) = 2\pi^2$ and $R(\bar g_i) \equiv s_i$ is constant. Moreover, in this case, one has $s_i \to 6$. Fix a point $x\in S^3$ and consider the blow up models 
\[
(M_i,g_i) = (S^3-\{x\}, \Gamma_i^4 \bar g_i),
\]
where the Green's functions $\Gamma_i$ for the conformal Laplacian are normalized so that $\min_{M_i} \Gamma_i = 1$.  
Then $w_i = \Gamma_i\inv$ solves 
\[
\lap_{g_i} w_i + \lambda_i w_i^5 = 0
\]
on $M_i$, where $\lambda_i = \frac{s_i}{8}$ is a positive constant. 
Moreover, we have 
\[
\|w_i\|_{L^\infty(M_i)} = 1 \text{ and } \|w_i\|_{L^6(M_i)}^6 = 2\pi^2
\]
and $\lambda_i \to \frac 3 4$. 

Theorem \ref{main} implies that the blow up models $(M_i,g_i)$ converge to Euclidean space in the $d_p$ sense. Let $x_i$ be a point where $w_i(x_i) = 1$. The $d_p$ theory gives the existence of ``nice'' diffeomorphisms $\psi_i\colon \Omega_i \to B(0,r_i)$ with $\psi_i(x_i) = 0$. Here $\Omega_i$ is a neighborhood of $x_i$ in $M_i$, and $B(0,r_i)$ is a ball of radius $r_i$ in $\R^3$, and $r_i\to \infty$.  The map $\psi_i$ is, in particular, an $\eps_i$-Gromov-Hausdorff approximation in the $d_p$-distance. See Section \ref{further} for more details. Define $\tilde w_i = w_i\circ \psi_i\inv$ so that $\tilde w_i\colon B(0,r_i)\to \R$.  Then we are able to prove that $\tilde w_i$ converges to one of the canonical conformal factors that transforms $\R^3$ into a round sphere.  

\begin{thm}
\label{main2}
Assume that $(M_i,g_i)$ is a sequence of complete, asymptotically flat 3-manifolds with vanishing scalar curvature. Assume that each $M_i$ is topologically $\R^3$, and that the optimal constant in the $L^2$ Sobolev inequality on $M_i$ is approaching the Euclidean constant. Suppose that $M_i$ carries a positive solution to 
\[
\lap_{g_i} w_i + \lambda_i w_i^5 = 0
\]
with 
\[
\|w_i\|_{L^\infty(M_i)} = 1 \text{ and } \|w_i\|_{L^6(M_i)}^6 = 2\pi^2 \text{ and } \lambda_i \to \frac 3 4. 
\]
Let $\psi_i$ be the diffeomorphisms described above and let $\tilde w_i = w_i \circ \psi_i\inv$. Then for any $q < 2$ and any $1 \le s < \infty$, the functions $\tilde w_i$ converge weakly in $W^{1,q}_{\text{loc}}(\R^3)$ and strongly in $L^s_{\text{loc}}(\R^3)$ to the function
\[
\tilde w(x) =  \sqrt{\frac{4}{4+\vert x\vert^2}}.
\]
\end{thm}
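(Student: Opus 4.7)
The strategy is to transport the problem to Euclidean space via the diffeomorphisms $\psi_i$, extract a subsequential limit of $\tilde w_i$ through low-regularity elliptic theory, and identify that limit using the Caffarelli--Gidas--Spruck classification. Set $\tilde g_i := (\psi_i\inv)^*g_i$; then $\tilde w_i$ satisfies $\lap_{\tilde g_i}\tilde w_i + \lambda_i\tilde w_i^5=0$ on $B(0,r_i)\subset\R^3$. From the $d_p$-theory feeding the proof of Theorem \ref{main}, one can arrange that $\tilde g_i$ is uniformly $L^\infty_{\mathrm{loc}}$-bounded and converges to $g_{\mathrm{euc}}$ (with its volume form to $dx$) in $W^{1,q}_{\mathrm{loc}}(\R^3)$ for every $q<2$.

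\textbf{Compactness and the limit PDE.} Because $0\le\tilde w_i\le 1$ and the coefficients of $\lap_{\tilde g_i}$ are uniformly $L^\infty$, the PDE is uniformly elliptic in divergence form with $L^\infty$ coefficients and bounded source, so De Giorgi--Nash--Moser yields a uniform $C^{0,\alpha}_{\mathrm{loc}}$ estimate, and a Caccioppoli inequality yields uniform $W^{1,2}_{\mathrm{loc}}$ control in the metrics $\tilde g_i$, which transfers to Euclidean $W^{1,q}_{\mathrm{loc}}$ for every $q<2$. Up to a subsequence, $\tilde w_i\to\tilde w$ in $C^{0,\beta}_{\mathrm{loc}}$ for $\beta<\alpha$, weakly in $W^{1,q}_{\mathrm{loc}}$, and strongly in $L^s_{\mathrm{loc}}$ for every $s<\infty$. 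Testing the equation against $\varphi\in C_c^\infty(\R^3)$,
\[
\int \tilde g_i^{ab}\,\partial_a\tilde w_i\,\partial_b\varphi\,dv_{\tilde g_i}=\lambda_i\int\tilde w_i^5\,\varphi\,dv_{\tilde g_i},
\]
and passing to the limit using strong $L^s_{\mathrm{loc}}$ convergence of $\tilde w_i$, weak $W^{1,q}_{\mathrm{loc}}$ convergence of $\nabla\tilde w_i$, convergence of $\tilde g_i$ to $g_{\mathrm{euc}}$, and $\lambda_i\to 3/4$, we obtain that $\tilde w$ is a weak and hence smooth nonnegative solution of $\lap\tilde w + \tfrac{3}{4}\tilde w^5=0$ on $\R^3$.

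\textbf{Nontriviality and classification.} Since $\tilde w_i(0)=1$ and the convergence is locally uniform, $\tilde w(0)=1$, and the strong maximum principle applied to the superharmonic $\tilde w$ then forces $\tilde w>0$ throughout $\R^3$. Fatou's lemma, together with $\int_{M_i}w_i^6\,dv_{g_i}=2\pi^2$ and the volume-form convergence, gives $\int_{\R^3}\tilde w^6\,dx\le 2\pi^2$, so $\tilde w$ is a positive $L^6$ entire solution of the critical equation. The Caffarelli--Gidas--Spruck classification then identifies $\tilde w$ with a member of the two-parameter family $(4\beta)^{1/4}(1+\beta|x-x_0|^2)^{-1/2}$; the bounds $\tilde w\le 1$ and $\tilde w(0)=1$ force the global maximum to be attained at $0$, hence $x_0=0$ and $\beta=1/4$, yielding $\tilde w(x)=\sqrt{4/(4+|x|^2)}$. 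Since the subsequential limit is uniquely determined, the full sequence converges.

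\textbf{Main obstacle.} The principal difficulty is the regularity gap between what $d_p$-compactness provides for the metrics (merely weak $W^{1,q}$ with $q<2$) and what classical elliptic theory would ordinarily require. This forces the entire analysis into divergence form and compels reliance on De Giorgi--Nash--Moser rather than Schauder estimates; it is also the reason one obtains only weak $W^{1,q}$ convergence for $\tilde w_i$ rather than $W^{1,2}$. A closely linked bookkeeping issue is making sure no $L^6$ mass is lost to infinity before the classification theorem is invoked; the strong maximum principle and Fatou's lemma suffice to produce a positive $L^6$ limit, but keeping track of these estimates while the underlying supports $B(0,r_i)$ themselves expand requires some care.
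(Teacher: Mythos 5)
There is a genuine gap in your proposal, and it occurs at the very first step. You assert that the $d_p$-theory allows one to ``arrange that $\tilde g_i$ is uniformly $L^\infty_{\text{loc}}$-bounded,'' and you then rely on this to claim uniform ellipticity of the divergence-form operator in the Euclidean picture, hence a uniform De Giorgi--Nash--Moser $C^{0,\alpha}_{\text{loc}}$ estimate, hence locally uniform convergence $\tilde w_i \to \tilde w$ and in particular $\tilde w(0) = 1$. But the $d_p$-convergence of Lee--Naber--Neumayer does \emph{not} provide pointwise (two-sided $L^\infty$) control of the pushed-forward metric. The output of their Theorem 6.1, which is what the paper uses, is only a family of $L^q$-norm comparison inequalities, and the decomposition underlying it (made explicit in Lemma \ref{inner-prod}) says only that
\[
(1-\sigma_j)^{k+1}g_{\text{euc}} \le \phi_j^* g_j \le (1+\sigma_j)^{k+1} g_{\text{euc}} \quad \text{on } \phi_j^*\mathcal G^k,
\]
with $\vol_{\text{euc}}(\phi_j^*\mathcal G^k)$ small for large $k$ but the distortion factors $(1\pm\sigma_j)^{k+1}$ deteriorating without bound as $k\to\infty$. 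Thus the coefficients of $\lap_{\tilde g_i}$ are \emph{not} uniformly elliptic on the ball $B(0,r_i)$, De Giorgi--Nash--Moser does not apply as stated, and the claimed $C^{0,\alpha}_{\text{loc}}$ bounds and locally uniform convergence are not established. Since $\tilde w(0)=1$ and the positivity of $\tilde w$ both hinge on this, the nontriviality of the limit is not proved by your argument.

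The paper's proof supplies exactly the missing idea: the non-concentration is obtained by running Moser iteration \emph{upstairs on the manifold} $M_j$ (Lemma \ref{moser}), where the hypothesis of a uniform $L^2$-Sobolev inequality is available and replaces the uniform ellipticity you lack downstairs. Together with the normalization $w_j(x_j)=1$, this yields a uniform lower bound $\|w_j\|_{L^s(\mathcal D_j(2r))}\ge (c/r^\sigma)^{-1/s}$, which pushes forward through the $L^q$-comparison estimates to $\|\tilde w\|_{L^\infty(B(0,4r))}\ge 1$ for every $r>0$, and hence (after smoothness from the limit PDE) $\tilde w(0)=1$. Passage to the limit in the weak formulation is then handled by the transfer Lemmas \ref{inner-prod} and \ref{inner-prod2}, which again work with the $L^q$-control directly rather than pointwise coefficient bounds. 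Once nontriviality is secured, the remaining two routes to the explicit bubble are roughly equivalent: you invoke Caffarelli--Gidas--Spruck on the positive entire solution, while the paper shows $\|\tilde w\|_{L^6}^6=2\pi^2$ and $\nabla\tilde w\in L^2(\R^3)$ and invokes the classification of Sobolev minimizers; your endgame is arguably lighter, but it only becomes accessible after the non-concentration step is repaired along the paper's lines.
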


\subsection{Sketch of Proof}

In the remainder of the introduction, we outline the proof of the main { theorems. We begin with Theorem \ref{main}.} Assume that $(M,g)$ is a complete, asymptotically flat 3-manifold with vanishing scalar curvature. Assume that $M$ is topologically $\R^3$ and that 
\[
\inf\left\{\frac{\int_{M} \vert \grad u\vert^2\, dv}{\left(\int_{M} u^6\, dv\right)^{1/3}}: u\in W^{1,2}(M)\right\} \ge \Lambda - \delta
\]
for some small $\delta > 0$. 

Recall that a crucial step in Schoen's resolution of the Yamabe problem was to invoke the positive mass theorem on $M$. 
Naively, one might hope to show that if $\delta$ is small then the ADM mass of $M$ is small, and therefore that one can apply stability for the positive mass theorem. However, this approach seems unlikely to succeed since the Sobolev inequality is scale invariant while the ADM mass is not. Similarly, given a surface $\Sigma$ embedded in $M$, one cannot expect to control the Hawking mass 
\[
m_H(\Sigma) = {\frac{\area(\Sigma)^{1/2}}{(16\pi)^{3/2}}}\left(16\pi - \int_\Sigma H^2\, da\right)
\]
as this quantity is also not scale invariant. Thus we focus our attention instead on the Willmore energy 
\[
\mathcal W(\Sigma) = \int_\Sigma H^2\, da,
\] 
which is scale invariant. 

Willmore \cite{willmore1965note} proved that in $\R^3$ all connected, embedded surfaces $\Sigma$ satisfy $\mathcal W(\Sigma) \ge 16\pi$. Recently, Agostiniani-Mazzieri \cite{agostiniani2020monotonicity} and Agostiniani-Fogagnolo-Mazzieri \cite{agostiniani2020sharp} proved similar Willmore-type inequalities in complete manifolds with non-negative Ricci curvature. As a first step, we show that a Willmore-type inequality holds in $M$ with some small error.

\begin{propA} 
\label{willmore} Assume that $(M,g)$ is a complete, asymptotically flat manifold with vanishing scalar curvature.  Assume that $M$ is topologically $\R^3$ and that
\[
\inf \left\{\frac{\int \vert \grad u\vert^2\, dv}{\left(\int u^6\, dv\right)^{1/3}}: u\in W^{1,2}(M)\right\} \ge \Lambda - \delta
\]
where $\delta > 0$ is small. Then there is an $\eps = \eps(\delta)$ such that 
\[
\mathcal W(\Sigma) = \int_\Sigma H^2\, da \ge 16\pi(1 - \eps)^2
\]
for all compact, connected, embedded surfaces $\Sigma$ in $(M,g)$. Here $\eps(\delta)\to 0$ as $\delta \to 0$. 
\end{propA}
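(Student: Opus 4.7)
The plan is to adapt the monotonicity-formula approach of Agostiniani--Fogagnolo--Mazzieri, who prove sharp Willmore-type inequalities on manifolds with nonnegative Ricci curvature. Ricci has no sign in our setting, but $R \equiv 0$ and the Sobolev deficit is small, and those two facts will replace the Ricci bound. Since $M \cong \R^3$, the surface $\Sigma$ bounds a bounded region $\Omega$. Let $u \colon M \setminus \bar\Omega \to \R$ be the harmonic capacitary potential of $\Omega$: $\lap_g u = 0$, $u \equiv 1$ on $\Sigma$, and $u \to 0$ at infinity. Asymptotic flatness gives existence and the Newtonian asymptotic $u \sim \mathrm{Cap}(\Omega)/(4\pi|x|)$; Hopf gives $|\grad u| > 0$ on $\Sigma$, with $\int_\Sigma |\grad u|\, d\sigma = \mathrm{Cap}(\Omega)$. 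On the regular level sets $\Sigma_t = \{u = t\}$ we study the scale-invariant functional
\[
F(t) = \int_{\Sigma_t} H\, |\grad u|^{1/2}\, d\sigma,
\]
which on Euclidean space with $\Sigma$ a round sphere of radius $R$ is constant in $t$, equal to $8\pi\sqrt{R} = 2\sqrt{4\pi\,\mathrm{Cap}(\Omega)}$.

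To see that $F$ is nearly monotone, differentiate along the level-set flow $\nu = \grad u / |\grad u|$ and combine three ingredients: the Bochner identity $\tfrac{1}{2}\lap|\grad u|^2 = |\grad^2 u|^2 + \mathrm{Ric}(\grad u, \grad u)$ for harmonic $u$; the Gauss equation $2K_{\Sigma_t} = H^2 - |A|^2 - 2\mathrm{Ric}(\nu,\nu)$ (which uses $R \equiv 0$); and the Gauss--Bonnet bound $\int_{\Sigma_t} K_{\Sigma_t}\, d\sigma \le 4\pi \chi(\Sigma_t) \le 8\pi$ on the outermost connected component. In the Ricci$\ge 0$ setting of Agostiniani--Fogagnolo--Mazzieri the computation gives $F'(t) \ge 0$; in ours it gives
\[
F'(t) \;\ge\; -\int_{\Sigma_t} \mathrm{Ric}(\grad u, \grad u)\, W\, d\sigma \;+\; (\text{nonnegative squared terms}),
\]
for an explicit nonnegative weight $W$, with the Ricci integral playing the role of an ``error term.''

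The heart of the proof is to bound the accumulated error $\int_0^1\!\!\int_{\Sigma_t} \mathrm{Ric}(\grad u, \grad u)\, W\, d\sigma\, dt$ by a quantity $\eps(\delta)$ that vanishes as $\delta \to 0$. Using the coarea formula to pass to a bulk integral on $M \setminus \bar\Omega$, and then the Bochner identity with $R \equiv 0$ to rewrite $\mathrm{Ric}(\grad u, \grad u) = \tfrac{1}{2}\lap|\grad u|^2 - |\grad^2 u|^2$, the Laplacian piece integrates by parts into scale-invariant boundary contributions controlled by asymptotic flatness and the Dirichlet data $u \equiv 1$, while the Hessian piece is bounded by testing the Sobolev inequality (\ref{ineq2}) against a suitable cutoff of $|\grad u|$: the Euclidean analog saturates the inequality, and the deficit $\delta$ therefore controls the $|\grad^2 u|^2$ excess via a Bianchi--Egnell-type stability estimate. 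Integrating the near-monotonicity from $t = 0$, where $F(t) \to 2\sqrt{4\pi\,\mathrm{Cap}(\Omega)}$ by asymptotic flatness (the level sets exit every compact set and look Euclidean), up to $t = 1$, and then applying Cauchy--Schwarz in the form $F(1)^2 \le \bigl(\int_\Sigma H^2\, d\sigma\bigr)\cdot \mathrm{Cap}(\Omega)$, yields
\[
\int_\Sigma H^2\, d\sigma \;\ge\; \frac{F(1)^2}{\mathrm{Cap}(\Omega)} \;\ge\; 16\pi\bigl(1 - \eps(\delta)\bigr)^2.
\]
The hardest step is the Hessian estimate: one needs a bound whose constant is scale invariant and depends only on $\delta$, not on $\Sigma$ or $\Omega$, which is precisely why both the Sobolev inequality and the functional $F$ have been set up to be scale invariant.
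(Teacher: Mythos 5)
Your plan follows the Agostiniani--Fogagnolo--Mazzieri blueprint: study the functional $F(t) = \int_{\Sigma_t} H\,|\grad u|^{1/2}\,d\sigma$, prove near-monotonicity, and convert the Ricci term into a small error using the near-Euclidean Sobolev inequality. The paper takes a genuinely different route. Instead of trying to estimate $\mathrm{Ric}(\grad u,\grad u)$ or $|\grad^2 u|^2$ directly, it \emph{transplants} the explicit Euclidean Sobolev extremal $s_\beta(r) = \sqrt{(\beta^2+1)/(\beta^2+r^2)}$ to $M$ through the level sets of $w = -\log\varphi$ (with $\varphi$ the capacitary potential), in the Bray--Neves style. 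It then controls the quantity $W(t) = \int_{\{w=t\}} |\grad w|^2\,da$ using Miao's monotonicity formula, and uses Miao's bound $\sqrt{W(0)} \le \sqrt{\pi} + \tfrac{1}{4}\bigl(\int_\Sigma H^2\bigr)^{1/2}$ to couple $W(0)$ to the Willmore energy. Plugging the transplanted test function into the near-Euclidean Sobolev inequality and evaluating the integrals explicitly in $\beta$ then yields $\eta = O(\delta^{2/3})$. This sidesteps any need for a Ricci or Hessian estimate.

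The gap in your proposal is at the step you yourself flag as the ``heart.'' You want to bound $\int\!\!\int \mathrm{Ric}(\grad u,\grad u)\,W$ by rewriting $\mathrm{Ric}(\grad u,\grad u) = \tfrac12\lap|\grad u|^2 - |\grad^2 u|^2$, integrating the first term by parts, and controlling the second by a ``Bianchi--Egnell-type'' stability estimate coming from the Sobolev deficit. No such estimate is available: Bianchi--Egnell controls how close a near-extremal of the Sobolev inequality is to a bubble in $\dot W^{1,2}$, i.e.\ it controls first derivatives of a near-extremal function, not the Hessian of a capacitary potential, and it does so on $\R^n$, not on a manifold with unknown curvature. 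Moreover, the integration by parts of $\tfrac12\lap|\grad u|^2\,W$ produces boundary terms on $\Sigma$ involving $\bd_\nu|\grad u|^2$ and the mean curvature $H$ of $\Sigma$; the Dirichlet condition $u\equiv 1$ pins down neither of these, so the boundary contribution re-introduces the very Willmore-type quantities you are trying to bound, creating circularity. You also invoke the Gauss--Bonnet bound $\int_{\Sigma_t} K \le 8\pi$ ``on the outermost connected component'' without establishing that the level sets of the capacitary potential are connected; the paper proves this as a separate lemma using $M\cong\R^3$, and it is a genuine hypothesis rather than a harmless normalization, since disconnected level sets could have $\chi > 2$ and break the inequality you need.
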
 

The proof of Proposition \ref{willmore} is inspired by an argument of Bray and Neves \cite{bray2004classification}. It is also morally related to the P\'olya-Szeg\H{o} principle and the Faber-Krahn inequality. Bray and Neves proved that 
\begin{equation}
Y(\RP^3,[g]) \le Y(\RP^3,[g_{\text{round}}])
\end{equation}
for all metrics $g$ on $\RP^3$. 
Their proof uses the level sets of weak inverse mean curvature flow to transfer an optimal test function from the standard blow up model for $\RP^3$ to an arbitrary blow up model. In \cite{mazurowski2023yamabe}, the authors showed that the optimal test function can also be transferred using the level sets of a harmonic function. Here we observe that the argument from \cite{mazurowski2023yamabe} can be modified to estimate the Willmore energy of all connected surfaces $\Sigma$ embedded in $M$. 
We note that this step relies on certain monotonicity formulas for harmonic functions derived by Miao \cite{miao2023mass}. These monotonicity formulas ultimately rely on the Gauss-Bonnet Theorem, and hence our proof does not easily generalize to higher dimensions. This application of Miao's monotonicity formulas also uses the fact that $M$ is topologically $\R^3$ in a crucial way. 

After proving Proposition \ref{willmore}, we then leverage the control over the Willmore functional to understand the isoperimetric profile of $M$. Let 
\[
I_M(v) = \inf\{\area(\bd \Omega): \Omega\subset M,\ \vol(\Omega) = v\}
\]
be the isoperimetric profile of $M$. Applying the Willmore-type inequality to isoperimetric surfaces in $M$, it is possible to derive a differential inequality satisfied by $I_M$. This ultimately yields a comparison between the isoperimetric profile of $M$ and the Euclidean isoperimetric profile $I_{\text{euc}}$.

\begin{propA}
\label{iso}
Assume that $(M,g)$ is a complete, asymptotically flat manifold with vanishing scalar curvature.  Assume that $M$ is topologically $\R^3$ and that 
\[
\inf \left\{\frac{\int \vert \grad u\vert^2\, dv}{\left(\int u^6\, dv\right)^{1/3}}: u\in W^{1,2}(M)\right\} \ge \Lambda - \delta
\]
where $\delta > 0$ is small. Then there is an $\eps = \eps(\delta)$ such that 
\[
I_M(v) \ge \left(1-2\eps\right)^{2/3} I_{\operatorname{euc}}(v)
\]
for all $v > 0$. Here $\eps(\delta)\to 0$ as $\delta\to 0$. 
\end{propA}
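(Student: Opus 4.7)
The plan is to derive a first-order differential inequality for $I_M$ and integrate it, using Proposition~\ref{willmore} to control the mean curvature of isoperimetric surfaces. Recall that the Euclidean profile is $I_{\text{euc}}(v) = (36\pi)^{1/3} v^{2/3}$, realized by round balls whose boundaries saturate the Willmore bound $\int H^2 = 16\pi$.

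First I would establish that for each volume $v > 0$ there is an isoperimetric region $\Omega_v \subset M$ with smooth boundary $\Sigma_v$ of constant mean curvature $H_v$. The near-Euclidean $L^2$ Sobolev inequality on $M$ upgrades to a uniform isoperimetric inequality; together with asymptotic flatness, this prevents loss of mass to infinity along minimizing sequences, and in dimension three no singularities occur, so $\Sigma_v$ is smooth. The standard first-variation formula then shows that $I_M$ is locally Lipschitz with $I_M'(v) = H_v$ almost everywhere. Applying Proposition~\ref{willmore} to each connected component of $\Sigma_v$ and summing, the CMC condition $H \equiv H_v$ on $\Sigma_v$ gives
\[
H_v^2\, I_M(v) \;=\; \int_{\Sigma_v} H^2\, da \;\ge\; 16\pi (1-\eps)^2,
\]
so that
\[
\frac{d}{dv}\left[\tfrac{2}{3} I_M(v)^{3/2}\right] \;=\; I_M(v)^{1/2}\, I_M'(v) \;\ge\; 4\sqrt{\pi}\,(1-\eps) \quad \text{a.e.\ in } v.
\]

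Integrating this from $0$ to $v$, and using that $I_M(v) \to 0$ as $v \to 0^+$ because small geodesic balls in $M$ are nearly Euclidean, gives $I_M(v)^{3/2} \ge 6\sqrt{\pi}\,(1-\eps)\, v$, equivalently $I_M(v) \ge (1-\eps)^{2/3} I_{\text{euc}}(v)$. The marginally weaker constant $(1-2\eps)^{2/3}$ in the statement leaves room to absorb technical losses, for example from replacing isoperimetric regions by nearly minimizing regions at volumes where a minimizer fails to exist, or from restricting attention to a range of volumes where CMC boundaries are cleanly behaved. The main obstacle I anticipate is precisely this existence and regularity step: ruling out mass escape to infinity for minimizing sequences in this particular asymptotically flat, scalar-flat setting, and verifying that the limiting CMC surfaces to which one applies Proposition~\ref{willmore} inherit the required connectedness and regularity. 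If direct existence proves delicate, a fallback is to replace almost-minimizers by nearby CMC surfaces via an implicit function argument, apply the Willmore bound there, and pass to the limit, paying a small error absorbed into the slack between $\eps$ and $2\eps$.
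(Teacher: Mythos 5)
Your approach is essentially the same as the paper's: apply the Willmore bound of Proposition~\ref{willmore} to the (constant-mean-curvature) boundary of an isoperimetric region to get a differential inequality for $I_M$, then integrate. The only technical difference is in the integration step. The paper weakens the Willmore factor $(1-\eps)$ to $(1-2\eps)$ so as to have a \emph{strict} differential inequality $(I_M)'_-(v) > (1-2\eps)\sqrt{16\pi/I_M(v)}$, and then runs a barrier/first-touching-point comparison against the exact ODE solution $y(v) = (1-2\eps)^{2/3} I_{\text{euc}}(v)$; you instead integrate $\frac{d}{dv}\bigl[\tfrac23 I_M^{3/2}\bigr] \ge 4\sqrt{\pi}(1-\eps)$ directly, which requires the absolute continuity of $I_M$ (stated in the paper's preliminary propositions, citing Flores--Nardulli) and in fact gives the slightly cleaner constant $(1-\eps)^{2/3}$ rather than $(1-2\eps)^{2/3}$. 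Both are valid; the paper's barrier argument is the one that naturally explains where the $2\eps$ comes from. Your instinct that the existence and smooth CMC regularity of isoperimetric regions for every $v>0$ is the main external input is exactly right: the paper handles this by citing Carlotto--Chodosh--Eichmair, which in turn rests on Shi's isoperimetric inequality for asymptotically flat $3$-manifolds of nonnegative scalar curvature, so no new argument is needed to rule out mass escape to infinity.
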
 

Finally, to prove Theorem \ref{main}, we show that this isoperimetric control allows us to apply the $d_p$ theory of Lee-Naber-Neumayer \cite{lee2023dp}. 

{ Next, to prove Theorem \ref{main2}, we use the fact that ``$d_p$ convergence preserves the $W^{1,2}$ analysis.'' Hence, given functions $w_i$ and diffeomorphisms $\psi_i$ as in the statement of Theorem \ref{main2}, we can show that the functions $\tilde w_i = w_i\circ \psi_i\inv$ converge to a weak solution $\tilde w$ of the equation 
\begin{equation}
\label{w-pde}
\lap \tilde w + \frac{3}{4} \tilde w^5 = 0
\end{equation}
on $\R^3$. We then use a Moser iteration argument to show that the normalization $\|w_i\|_{L^\infty(M_i)} = 1$ prevents concentration, and therefore that $\tilde w$ is non-zero. Finally, we use the classification of $L^2$ Sobolev minimizers on $\R^3$ to conclude that $\tilde w$ has the desired form.} 

\subsection{Organization} The remainder of the paper is organized as follows. In Section \ref{sec:willmore}, we prove Proposition \ref{willmore} on the nearly optimal Willmore inequality. In Section \ref{sec:iso}, we prove Proposition \ref{iso} on the isoperimetric profile of the blow up models. In Section \ref{sec:dp}, we obtain $d_p$ convergence and complete the proof of Theorem \ref{main}. 
Finally, in Section \ref{further}, we show the convergence of the conformal factors and complete the proof of Theorem \ref{main2}. 

\subsection{Acknowledgements}
The authors would like to thank Xin Zhou for his tremendous support and encouragement. They would also like to thank Man-Chun Lee and Gaoming Wang for helpful discussions related to this work. X.Y. would like to thank Sun-Yung Alice Chang and Paul Yang for their helpful discussions related to this work. L.M. acknowledges the support of an AMS Simons Travel Grant. X.Y. is supported by NSF grant DMS-1945178.

\section{A Willmore type inequality} 

\label{sec:willmore}

This section is dedicated to the proof of Proposition \ref{willmore}. The proof will use some of Miao's monotonicity formulas for harmonic functions \cite{miao2023mass}. In these monotonicity formulas, it is essential that the level sets of the harmonic function under consideration are connected. In our situation, this follows from the fact that $M$ is topologically $\R^3$. 

\begin{lem}
\label{lem:connect}
Assume that $(M,g)$ is a complete, asymptotically flat manifold that is topologically $\R^3$. Let $\Sigma$ be a compact, connected, embedded surface in $M$ and let $\Omega$ be the compact region enclosed by $\Sigma$. Let $\varphi$ be the solution to 
\[
\begin{cases}
\lap \varphi =0, &\text{in } M - \Omega\\
\varphi = 1, &\text{on } \Sigma,\\
\varphi\to 0, &\text{at infinity}. 
\end{cases}
\]
Then all regular level sets of $\varphi$ are connected. 
\end{lem}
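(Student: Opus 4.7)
\emph{Setup.} I would extend $\varphi$ to all of $M$ by setting $\varphi \equiv 1$ on $\bar\Omega$, so that $\varphi\in C(M)$, is smooth and harmonic on $M\setminus\bar\Omega$, equals $1$ on $\Sigma$, and tends to $0$ at infinity. The strong maximum principle together with the decay gives $0 < \varphi < 1$ on $M\setminus\bar\Omega$, and continuity plus decay make each superlevel set $\{\varphi\ge c\}$, for $c\in(0,1)$, compact in $M$. Fix a regular value $c\in(0,1)$ and set $\Sigma_c = \{\varphi=c\}$; then $\Sigma_c$ is a smooth embedded hypersurface, and the nonvanishing normal $\grad\varphi$ makes each of its components orientable. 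My plan is to prove connectedness of $\Sigma_c$ by first showing that $\{\varphi>c\}$ and $\{\varphi<c\}$ are each connected, and then counting the connected components of $M\setminus\Sigma_c$ topologically.

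\emph{Connectedness of the super- and sub-level sets.} The connected set $\bar\Omega$ lies in $\{\varphi>c\}$, hence in a single component $W_0$. Any other component $W$ of $\{\varphi>c\}$ would be disjoint from $\bar\Omega$ and thus contained in $M\setminus\bar\Omega$, where $\varphi$ is harmonic. Since $\{\varphi\ge c\}$ is compact, $W$ is precompact; its boundary lies in $\Sigma_c$ by openness of $W$, so the maximum principle would force $\varphi\le c$ on $W$, contradicting $\varphi>c$ there. Hence $\{\varphi>c\} = W_0$ is connected. The same maximum principle argument rules out bounded components of $\{\varphi<c\}$. To handle unbounded components I use that $M\cong\R^3$ has a single end: for $R$ large enough, the asymptotic region $E_R = \{\vert x\vert > R\}$ is connected and, by the decay of $\varphi$, contained in $\{\varphi<c\}$. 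Any unbounded component of $\{\varphi<c\}$ must contain points with $\vert x\vert$ arbitrarily large and therefore meet $E_R$; since $E_R$ is connected and must sit inside a single component, all unbounded components coincide, so $\{\varphi<c\}$ is connected.

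\emph{Topological counting.} With both $\{\varphi>c\}$ and $\{\varphi<c\}$ connected, $M\setminus\Sigma_c$ has exactly two connected components. On the other hand, if $\Sigma_c$ has $k$ connected components, each is a closed orientable surface embedded in $M\cong\R^3$, and Alexander duality in $S^3 = \R^3\cup\{\infty\}$—via $\tilde H_0(S^3\setminus\Sigma_c)\cong\tilde H^2(\Sigma_c)\cong\Z^k$—gives that $M\setminus\Sigma_c$ has exactly $k+1$ components. Therefore $k+1 = 2$, i.e.\ $k = 1$, and $\Sigma_c$ is connected. The only real subtlety here is the Alexander duality count, which is the topological incarnation of the geometric fact that each additional disjoint closed embedded surface in $\R^3$ splits exactly one existing region in two; everything else reduces to a clean application of the maximum principle together with the one-endedness of $M$.
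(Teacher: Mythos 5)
Your proof is correct but organized differently from the paper's. The paper argues by contradiction directly at the level of components of $\{\varphi = c\}$: take two components $\Gamma_1,\Gamma_2$, use that $M\cong\R^3$ to fill each with a compact region $\Omega_i$ (Jordan--Brouwer), observe via the maximum principle that each $\Omega_i$ must contain $\Omega$, so $\Omega_1,\Omega_2$ are nested, and then derive a contradiction by applying the maximum principle again on the ``annulus'' $\Omega_2\setminus\Omega_1$, which is disjoint from $\Omega$ and has $\varphi\equiv c$ on both boundary spheres. Your route instead splits the task: you first show $\{\varphi>c\}$ is connected (it contains $\bar\Omega$, and any other component would be a precompact harmonic region with constant boundary value), then that $\{\varphi<c\}$ is connected (one-endedness of $M$ plus the same maximum-principle exclusion of bounded components), and finally you invoke Alexander duality in $S^3$ to count: $k$ disjoint closed orientable surfaces leave $k+1$ complementary components, so $k+1=2$ forces $k=1$. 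Both arguments use exactly the same two ingredients --- the topology of $\R^3$ and the strong maximum principle --- but the paper's version avoids naming Alexander duality explicitly by only needing the bounded-region statement of Jordan--Brouwer together with the disjoint-or-nested dichotomy for enclosed regions, whereas yours promotes the topological input to a clean counting lemma. Your structure has the small advantage of making the conclusion ``$M\setminus\Sigma_c$ has exactly two components'' explicit and reusable, while the paper's is a bit shorter and more self-contained. Either would be acceptable; the one minor point worth tightening in your write-up is the treatment of unbounded components of $\{\varphi<c\}$: rather than saying such a component ``must contain points with $|x|$ arbitrarily large,'' it is cleaner to note that any component disjoint from the end region $E_R$ is contained in the compact set $\{\varphi\ge c\}\cup(M\setminus E_R)$, hence precompact, and is therefore ruled out by the same maximum-principle argument as the bounded case.
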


\begin{proof}
Consider some regular value $0 < t < 1$ of $\varphi$. Suppose for contradiction that $\{\varphi = t\}$ is not connected. Then we can find two distinct compact, connected, embedded surfaces $\Gamma_1$ and $\Gamma_2$ contained in $\{\varphi=t\}$. Since $M$ is topologically $\R^3$, it follows that $\Gamma_i$ bounds a compact region $\Omega_i$ for $i=1,2$. 
Note that both regions $\Omega_i$ must intersect $\Omega$. Indeed, if $\Omega_i$ did not intersect $\Omega$ then $\varphi$ would have either an interior minimum or an interior maximum on $\Omega_i$ and therefore $\varphi$ would be constant.  Hence $\Omega_i$ intersects $\Omega$. In fact, this implies that $\Omega_i$ entirely contains $\Omega$ since $\Gamma_i$ is disjoint from $\Sigma$. 

Next, since $\Gamma_1$ and $\Gamma_2$ are disjoint, either $\Omega_1$ and $\Omega_2$ are disjoint, or $\Omega_1$ and $\Omega_2$ are nested. The former possibility is impossible since $\Omega_1$ and $\Omega_2$ both enclose $\Sigma$. Therefore $\Omega_1$ and $\Omega_2$ are nested. Without loss of generality, we can suppose that $\Omega_1 \subset \Omega_2$. But then $\Omega_2 \setminus \Omega_1$ does not intersect $\Omega$. Hence $\varphi$ has either an interior minimum or an interior maximum in $\Omega_2 \setminus \Omega_1$. This implies that $\varphi$ is constant, which is a contradiction.  This lemma follows.
\end{proof} 

Now we can give the proof of Proposition \ref{willmore}.  The proof is a more quantitative version of the argument in \cite{mazurowski2023yamabe}.

\stepcounter{rethm}
\stepcounter{rethm}
\begin{repropA} Assume that $(M,g)$ is a complete, asymptotically flat manifold with vanishing scalar curvature.  Assume that $M$ is topologically $\R^3$ and that 
\[
\inf \left\{\frac{\int \vert \grad u\vert^2\, dv}{\left(\int u^6\, dv\right)^{1/3}}: u\in W^{1,2}(M)\right\} \ge \Lambda - \delta
\]
where $\delta > 0$ is small. Then there is an $\eps = \eps(\delta)$ such that 
\[
\mathcal W(\Sigma) = \int_\Sigma H^2\, da \ge 16\pi(1 - \eps)^2
\]
for all compact, connected, embedded surfaces $\Sigma$ in $(M,g)$. Here $\eps(\delta)\to 0$ as $\delta \to 0$. 
\end{repropA}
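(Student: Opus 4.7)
My plan, following \cite{mazurowski2023yamabe}, is to use the harmonic potential of $\Sigma$ to construct a test function for the near-optimal Sobolev inequality, and to extract Willmore information by comparing the resulting functional inequality to its sharp Euclidean form via Miao's monotonicity along level sets. Let $\Omega$ denote the region enclosed by $\Sigma$, and let $\varphi\colon M\setminus\Omega \to (0,1]$ be the harmonic function with $\varphi|_\Sigma = 1$ and $\varphi\to 0$ at infinity; existence and the asymptotic expansion $\varphi \sim c|x|\inv$ at infinity are standard on asymptotically flat 3-manifolds. By Lemma \ref{lem:connect}, every regular level set $\Sigma_t := \{\varphi = t\}$ is a connected compact embedded surface; since $M$ is topologically $\R^3$ each $\Sigma_t$ is diffeomorphic to $S^2$, and Gauss--Bonnet gives $\int_{\Sigma_t} K_{\Sigma_t}\, da = 4\pi$. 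The capacity flux $\Phi := \int_{\Sigma_t} |\grad \varphi|\, da$ is independent of $t$ by the divergence theorem.

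Next I would choose a one-parameter profile $F_c(s) = s/\sqrt{s^2+c}$ modelled on Aubin's Euclidean extremal $(1+c|x|^2)^{-1/2}$ (which is exactly $F_c(\varphi)$ when $\varphi = R|x|\inv$ on the exterior of a Euclidean ball $B_R$), and set $u := F_c(\varphi)$ on $M\setminus\Omega$, extended by the constant $F_c(1)$ on $\Omega$. By the coarea formula and the Cauchy--Schwarz inequality $\int_{\Sigma_t} |\grad \varphi|\inv\, da \ge |\Sigma_t|^2/\Phi$,
\[
\int_M |\grad u|^2\, dv = \Phi \int_0^1 F_c'(t)^2\, dt, \qquad \int_M u^6\, dv \ge F_c(1)^6\,\vol(\Omega) + \int_0^1 F_c(t)^6\, \frac{|\Sigma_t|^2}{\Phi}\, dt.
\]
Feeding these into the hypothesis $\int|\grad u|^2 \ge (\Lambda - \delta)(\int u^6)^{1/3}$ yields a one-dimensional Hardy-type inequality whose coefficients are the level-set areas $|\Sigma_t|$ and the capacity $\Phi$, and in which both Cauchy--Schwarz steps are simultaneously saturated on Euclidean balls.

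The remaining task is to convert this into Willmore control on $\Sigma = \Sigma_1$. This is where Miao's monotonicity enters: by combining the Bochner identity with the Gauss equation and Gauss--Bonnet on each connected $\Sigma_t$, Miao obtains a scale-invariant Willmore/Hawking-type functional along the level sets of $\varphi$ whose value at $t \to 0^+$ equals $16\pi$ (a consequence of the asymptotic sphericity of large level sets at infinity together with the Euclidean Willmore inequality) and whose value at $t = 1$ is dominated by $\int_\Sigma H^2\, da$, up to explicit corrections controlled by $\Phi$ and $|\Sigma|$. Substituting this monotonicity into the Hardy-type inequality from the previous paragraph, optimizing the parameter $c > 0$, and using that the Euclidean model saturates every step simultaneously, produces the desired estimate $\int_\Sigma H^2\, da \ge 16\pi (1 - \eps(\delta))^2$ with $\eps(\delta) \to 0$ as $\delta \to 0$.

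The main obstacle I anticipate is propagating the small Sobolev deficit $\delta$ through Miao's monotonicity with genuinely quantitative error control: the bulk integrands involve nonnegative terms such as $|\grad^2 \varphi|^2$, the squared traceless second fundamental form $|A^\circ_{\Sigma_t}|^2$, and $\operatorname{Ric}(\grad\varphi,\grad\varphi)$, and one must show that all of them are simultaneously absorbed into $\eps(\delta)$, rather than producing an error that only vanishes under additional hypotheses. A secondary but routine issue is handling possible critical points of $\varphi$, which is dealt with by Sard's theorem and a limit through regular values. Finally, because every integral entering the argument (capacity flux, level-set area, Willmore energy, and the Sobolev quotient itself) is scale invariant, the resulting estimate is automatically scale invariant, matching the scale invariance of the hypothesis.
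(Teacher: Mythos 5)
Your overall strategy matches the paper's: parameterize by the harmonic capacitary potential $\varphi$, use the connectedness lemma so every regular level $\Sigma_t$ is a sphere, push the Aubin extremal profile through the level sets by the coarea formula, invoke Miao's monotonicity, and extract Willmore control from the near-saturation of the Sobolev inequality. The obstacle you flag at the end is real, and it is exactly where your proposal as written has a gap.

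The gap is in the choice of intermediate quantity. Your Cauchy--Schwarz step lower-bounds $\int_{\Sigma_t}\vert\grad\varphi\vert\inv\,da$ by $\vert\Sigma_t\vert^2/\Phi$, which introduces the area $\vert\Sigma_t\vert$. But neither Miao's monotonicity nor anything else in this setup gives direct Willmore-type control of $\vert\Sigma_t\vert$. The paper instead uses the three-exponent H\"older inequality $\int_{\Sigma_t}\vert\grad\varphi\vert\inv\,da \ge \Phi^3 \big(\int_{\Sigma_t}\vert\grad\varphi\vert^2\,da\big)^{-2}$, because the quantity $W(t)=\int_{\Sigma_t}\vert\grad w\vert^2\,da$ (with $w=-\log\varphi$) is precisely what is controlled: Proposition 3.1 of \cite{mazurowski2023yamabe} gives $\sqrt{W(t)}\le e^{-t}\sqrt{W(0)}+(1-e^{-t})\sqrt{4\pi}$, and Miao's Corollary 7.1 bounds $\sqrt{W(0)}\le\sqrt\pi+\tfrac14\big(\int_\Sigma H^2\,da\big)^{1/2}$. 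This is also where your description of the monotone quantity is off: what one tracks is $W(t)$ (with $W\to 4\pi$ at infinity and $W(0)$ bounded by the Willmore energy of $\Sigma$ through a separate pointwise-at-$\Sigma$ estimate), not a Willmore-type functional of the level sets whose limiting value is $16\pi$. You can recover the paper's bound from yours via an additional $\Phi^2\le\vert\Sigma_t\vert\,W(t)$, but as written the proposal lacks the input needed to make $\vert\Sigma_t\vert$ see $\eta$. One further point of bookkeeping: rather than ``optimizing $c$,'' the paper fixes the profile parameter up front in terms of $\delta$ (roughly $\beta\sim\delta^{-1/3}$, so $c\sim\delta^{2/3}$) so that the restriction-to-exterior loss is already $O(\delta)$, and then the argument outputs $\eta = O(\delta^{2/3})$. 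Without that calibration, optimizing over $c$ at the end would not by itself yield an $\eps(\delta)\to 0$.
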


\begin{proof}
Consider a compact, connected, embedded surface $\Sigma$ in $M$. Note that $\Sigma$ bounds a compact region $\Omega$.  If $\mathcal W(\Sigma) \ge 16\pi$ then there is nothing to prove. So assume that 
\[
\int_\Sigma H^2\, da =  16\pi (1 - \eta)^2
\]
for some $0< \eta \le 1$. 
Since both the Sobolev inequality and the Willmore energy are scale invariant, we can assume without loss of generality that $\vol(\Omega) = \frac{4\pi}{3}$.  The functions 
\[
s_\beta(r) = \sqrt{\frac{\beta^2 + 1}{\beta^2 + r^2}}
\]
all achieve equality in the Euclidean Sobolev inequality (see \cite{lee1987yamabe} Section 3). Moreover, $s_\beta$ converges smoothly to 1 on compact subsets of $\R^3$ as $\beta \to \infty$. 
Therefore, 
it is possible to select $\beta$ large enough that 
\[
\Lambda \ge \frac{\int_{\R^3 - B_1} \vert \grad s_\beta\vert^2\, dv}{\left(\int_{\R^3-B_1} s_\beta^6\, dv\right)^{1/3}} - \delta. 
\]
In fact, we will need to quantitatively relate $\beta$ and $\delta$. To this end, one may explicitly evaluate the above integrals in polar coordinates to see that 
\begin{align*}
\int_{\R^3-B_1} \vert \grad s_\beta\vert^2\, dv &= \int_1^\infty 4\pi r^2\frac{ (1+\beta^2)r^2}{(\beta^2+r^2)^3}\, dr\\
&= \frac{\pi}{2}(1+\beta^2)\left(\frac{-3\beta^2 r - 5r^3}{(\beta^2+r^2)^2}+\frac{3\arctan(\frac r \beta)}{\beta}\right)\eval_1^\infty\\
& = \frac{\pi}{2}\left(\frac{5\beta + 3\beta^3 + 3(1+\beta^2)^2 \arctan(\beta)}{\beta(1+\beta^2)}\right) = \frac{3\pi^2}{4}\beta + \frac{3\pi^2}{4}\beta^{-1} + O(\beta^{-2}),
\end{align*}
and that 
\begin{align*}
\int_{\R^3-B_1} s_\beta^6\, dv &= \int_1^\infty 4\pi r^2\left(\frac{\beta^2 + 1}{\beta^2+r^2}\right)^3\, dr \\
& = \frac{\pi(1+\beta^2)^3}{2\beta^3(\beta^2+r^2)^2}\left(-\beta^3 r + \beta r^3 + (\beta^2+r^2)^2\arctan\left(\frac r \beta\right) \right)\eval_1^\infty\\
&= \frac{\pi}{2}\left(\frac{-\beta + \beta^5 + (1+\beta^2)^3\arctan(\beta)}{\beta^3}\right) = \frac{\pi^2}{4}\beta^3 + \frac{3\pi^2}{4}\beta - \frac{4\pi}{3} + O(\beta^{-1}). 
\end{align*}
Therefore one has  
\begin{align*}
\frac{\int_{\R^3 - B_1} \vert \grad s_\beta\vert^2\, dv}{\left(\int_{\R^3-B_1} s_\beta^6\, dv\right)^{1/3}} 
= \Lambda + \frac{2^{8/3} \pi^{1/3}}{3}\beta^{-3} + O(\beta^{-4}),
\end{align*}
and so it suffices to choose $\beta = O(\delta^{-1/3})$. 

Let $\varphi$ be the capacitary potential for $\Sigma$, i.e., the solution to 
\[
\begin{cases}
\lap \varphi = 0, &\text{in } M - \Omega\\
\varphi=1, &\text{on } \Sigma\\
\varphi\to 0, &\text{at infinity}. 
\end{cases}
\]
Note that $\bar \varphi(r) = 1/r$ is the capacitary potential for the Euclidean ball $B_1$.  Let $w = -\log \varphi$ and $\bar w = -\log \bar \varphi = \log r$. 
Now define $f_\beta\colon [0,\infty)\to \R$ so that $s_\beta(r) = f_\beta\circ \bar w(r)$ for $r\ge 1$. Explicitly, one has
\[
f_\beta(t) = \sqrt{\frac{\beta^2+1}{\beta^2+e^{2t}}}.
\]
Then define $s:M\to \R$ by 
\[
s = 
\begin{cases}
1, &\text{on } \Omega,\\
f_\beta\circ w, &\text{on } M- \Omega. 
\end{cases}
\]
We now show that plugging $s$ into the almost Euclidean Sobolev inequality yields an estimate for $\mathcal W(\Sigma)$. 

Define 
\[
W(t) = \int_{\{w=t\}} \vert \grad w\vert^2\, da. 
\]
Note that Lemma \ref{lem:connect} implies that all level sets of $\varphi$ and hence $w$ are connected.  Therefore, Proposition 3.1 in \cite{mazurowski2023yamabe} implies that 
\[
W(t) \le \left[e^{-t}\sqrt{W(0)} + (1-e^{-t})\sqrt{4\pi}\right]^2
\]
for all $t\ge 0$. Moreover, by a result of Miao \cite[Corollary 7.1]{miao2023mass}, we get that 
\[
{\sqrt{W(0)}} \le \sqrt{\pi} + \frac{1}{4}\left(\int_\Sigma H^2\, da\right)^{1/2} = \sqrt{\pi}  + \sqrt{\pi}(1-\eta) = (2-\eta)\sqrt{\pi}. 
\]
Combining the previous two inequalities, it follows that
\[
W(t) \le \pi \left[(2-\eta) e^{-t} + 2(1-e^{-t})\right]^2 = \pi(2 - \eta e^{-t})^2
\]
for all $t \ge 0$. 

Now observe that 
\[
\int_{M-\Omega} \vert \grad s\vert^2 \, dv = \int_0^\infty f_\beta'(t)^2 \left( \int_{\{w=t\}} \vert \grad w\vert\, da\, \right) dt = C \int_0^\infty f_\beta'(t)^2 e^t \,dt 
\]
where $C = \int_{\Sigma} \vert \grad w\vert\, da$. Likewise one has  
\[
\int_{\R^3-B_1} \vert  \grad  s_\beta\vert^2 \, dv  = \int_0^\infty f_\beta'(t)^2\left(\int_{\{\bar w=t\}} \vert  \grad \bar w\vert\, da\right)\, dt = 4\pi \int_0^\infty f_\beta'(t)^2 e^t\, dt.
\]
We also have 
\begin{align*}
\int_{M-\Omega} s^6\, dv &= \int_0^\infty f_\beta(t)^6 \left(\int_{\{w=t\}} \vert \grad w\vert^{-1}\, da\right)\, dt \\
&\ge \int_0^\infty f_\beta(t)^6 \left(\int_{\{w=t\}} \vert \grad w\vert^2\, da\right)^{-2}\left(\int_{\{w=t\}} \vert \grad w\vert\, da\right)^3\, dt\\
&\ge \pi^{-2} C^3 \int_0^\infty f_\beta(t)^6 e^{3t}  (2-\eta e^{-t})^{-4}\, dt
\end{align*}
and likewise
\[
\int_{\R^3-B_1}  s_\beta^6 \, dv = 2^{-4} \pi^{-2} (4\pi)^3 \int_0^\infty f_\beta(t)^6 e^{3t} \, dt. 
\]
Thus one obtains
\[
\Lambda + \delta \ge  \frac{\int_0^\infty f_\beta'(t)^2 e^t\, dt}{\left(2^{-4}\pi^{-2} \int_0^\infty f_\beta(t)^6 e^{3t}\, dt\right)^{1/3}} 
\]
and also
\[
\Lambda - \delta \le \frac{\int_M \vert \grad s\vert^2\, dv}{\left(\int_{M} s^6\, dv\right)^{1/3}} \le  \frac{\int_0^\infty f_\beta'(t)^2 e^t\, dt}{\left(\pi^{-2} \int_0^\infty f_\beta(t)^6 e^{3t} (2-\eta e^{-t})^{-4}\, dt\right)^{1/3}}.
\]
It follows that
\[
0 \le \frac{\int_0^\infty f_\beta'(t)^2 e^t\, dt}{\left(2^{-4}\pi^{-2} \int_0^\infty f_\beta(t)^6 e^{3t}\, dt\right)^{1/3}} -  \frac{\int_0^\infty f_\beta'(t)^2 e^t\, dt}{\left(\pi^{-2} \int_0^\infty f_\beta(t)^6 e^{3t} (2-\eta e^{-t})^{-4}\, dt\right)^{1/3}} \le 2\delta. 
\]
For simplicity, rewrite this as 
\[
0 \le \frac{a}{b}-\frac{a}{c} \le 2\delta. 
\]
Then we have 
\[
0 \le  1 - \frac{b}{c} \le  \frac{2\delta b}{a}\le C\delta. 
\]
It remains to translate this into a bound on $\eta$. 

As above, one can more or less explicitly evaluate the integrals. One has 
\begin{align*}
b^3 &= 2^{-4}\pi^{-2} \int_0^\infty f_\beta(t)^6 e^{3t}\, dt = \frac{1}{(4\pi)^3} \int_{\R^3-B_1}  s_\beta^6 \, dv \\
&= \frac{\beta^5 - \beta + (1+\beta^2)^3\arctan(\beta)}{128\pi^2 \beta^3} =
\frac{1}{256\pi}\beta^3  + O(\beta), 
\end{align*}
and therefore 
$
C_1 \beta^3 \le b^3 \le C_2\beta^3
$
for some absolute constants $C_1, C_2$.
Note also that
\[
(2-\eta e^{-t})^{-4} \ge 2^{-4}\left(1+\frac{\eta}{2} e^{-t}\right)
\]
for all $t\ge0$. 
One computes that 
\begin{align*}
2^{-4}\pi^{-2}\int_0^\infty f_\beta(t)^6 e^{3t} \left(1+\frac{\eta}{2} e^{-t}\right)\, dt &= b^3 - \frac{(1+\beta^2)^3 \eta}{128\pi^2 (\beta^2+e^{2t})^2}\eval_{0}^\infty = b^3 + \frac{(1+\beta^2)\eta}{128\pi^2}
\end{align*}
and hence that 
\[
c^3 \ge b^3 + \frac{(1+\beta^2)\eta}{128\pi^2}.
\]
Thus, for a constant $C$ that is allowed to vary from place to place but is independent of $\delta$, $\beta$, and $\eta$, we get that 
\[
\frac{b^3}{c^3} \le \frac{b^3}{b^3 + C (1+\beta^2)\eta} = \frac{1}{1+ C(\eta/\beta)} \le 1 - \frac{C\eta}{\beta}. 
\]
It follows that
\[
\frac{b}{c} \le \left(1-\frac{C\eta}{\beta}\right)^{1/3} \le 1 - \frac{C\eta}{\beta} 
\]
and therefore that
\[
C\delta \ge 1-\frac{b}{c} \ge \frac{C\eta}{\beta}.
\]
Since $\delta = O(\beta^{-3})$, this implies that $\eta = O(\delta^{2/3})$ which goes to 0 as $\delta \to 0$. 
\end{proof}

\section{Isoperimetric Control} 

\label{sec:iso}

Next we exploit the Willmore-type inequality to gain isoperimetric control. As above, assume that $(M,g)$ is a complete, asymptotically flat manifold with vanishing scalar curvature. Moreover, assume that $M$ is topologically $\R^3$ and that $M$ has a nearly Euclidean $L^2$ Sobolev inequality.
Let 
\[
I_M(v) = \inf\{\area(\bd \Omega): \Omega \subset M,\ \vol(\Omega) = v\}
\]
be the isoperimetric profile for $M$. In the next two propositions, we collect some well-known properties of the isoperimetric profile of a 3-dimensional asymptotically flat manifold with non-negative scalar curvature.

The first proposition gives the existence of an isoperimetric region for all volumes $v > 0$. This was proven by Carlotto-Chodosh-Eichmair in \cite{carlotto2016effective}, making use of Shi's isoperimetric inequality \cite{shi2016isoperimetric}. 

\begin{prop}
For each $v > 0$, there exists an isoperimetric region $\Omega\subset M$ such that $\vol(\Omega) = v$ and $\area(\bd \Omega) = I(v)$.  Moreover, $\bd \Omega$ is a (possibly disconnected) constant mean curvature surface. 
\end{prop}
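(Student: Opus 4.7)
The plan is to combine the direct method with a concentration-compactness analysis to prevent loss of volume at infinity, following Carlotto-Chodosh-Eichmair \cite{carlotto2016effective}, and then read off the CMC condition from the first variation. Fix $v > 0$ and take a minimizing sequence of sets of finite perimeter $\Omega_i \subset M$ with $\vol(\Omega_i) = v$ and $\area(\bd \Omega_i) \to I_M(v)$. By the standard compactness of Caccioppoli sets in bounded regions, together with a diagonal argument in a compact exhaustion of $M$, a subsequence satisfies $\chi_{\Omega_i} \to \chi_{\Omega_\infty}$ in $L^1_{\text{loc}}(M)$ for some Caccioppoli set $\Omega_\infty$, and lower semicontinuity of perimeter gives $\area(\bd \Omega_\infty \cap K) \le \liminf_i \area(\bd \Omega_i \cap K)$ on bounded $K$. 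The candidate $\Omega_\infty$ fails to solve the problem only if $v_1 := v - \vol(\Omega_\infty) > 0$, i.e., some mass drifts into the asymptotic end.

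To rule out this drift, I would invoke the asymptotically flat structure together with Shi's sharp Euclidean isoperimetric inequality \cite{shi2016isoperimetric}, which asserts that $I_M(v') \ge I_{\text{euc}}(v')$ for all $v' > 0$, strictly unless $(M,g)$ is isometric to Euclidean $\R^3$. Choose an exhaustion by coordinate balls $B_{R_i}$ with $R_i \to \infty$ slowly enough that $\Omega_i \cap B_{R_i}$ still captures approximately $\vol(\Omega_\infty)$ of the volume while the outer pieces carry approximately $v_1$. Because $M \setminus B_{R_i}$ is arbitrarily close to Euclidean as $i \to \infty$, the outer pieces satisfy a nearly sharp Euclidean isoperimetric inequality, yielding $\area(\bd \Omega_i \setminus B_{R_i}) \ge (1 - o(1)) I_{\text{euc}}(v_1)$. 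Passing to the limit gives
\[
I_M(v) \ge I_M(v - v_1) + I_{\text{euc}}(v_1).
\]
On the other hand, sub-additivity $I_M(v) \le I_M(v - v_1) + I_M(v_1)$ is realized by placing disjoint near-minimizers for $v - v_1$ and $v_1$ far apart in the single asymptotic end. Combining these bounds with Shi forces $I_M(v_1) = I_{\text{euc}}(v_1)$; the rigidity clause then forces $(M,g)$ to be Euclidean, in which case round balls are classical isoperimetric regions. In all other cases $v_1 = 0$ and $\Omega_\infty$ itself attains the infimum.

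The CMC conclusion then follows from the first variation formula applied to compactly supported, volume-preserving deformations of $\Omega$: the first variation of area vanishes, so the distributional mean curvature of $\bd^* \Omega$ is constant, equal to the Lagrange multiplier associated to the volume constraint. Smoothness of the reduced boundary in dimension three, together with the absence of a singular set, is the standard De Giorgi regularity theory for minimizers of perimeter. The main obstacle in this strategy is executing the concentration-compactness step rigorously: in particular, choosing the exhaustion radii $R_i$ so that the inner-outer split is genuinely clean and so that the nearly sharp Euclidean isoperimetric inequality applies on the outer piece with quantitatively controlled error. The topological assumption $M \cong \R^3$ plays an essential role here, as it guarantees a single asymptotic end in which both the drifted mass and the far-apart sub-additive competitors can be located.
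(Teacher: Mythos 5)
Your overall strategy---direct method plus concentration--compactness to prevent volume drifting into the asymptotic end, followed by first variation and De Giorgi regularity for the CMC conclusion---is the strategy of Carlotto--Chodosh--Eichmair \cite{carlotto2016effective}; the paper itself offers no proof and simply cites that work. However, there is a substantive error in your statement of Shi's isoperimetric inequality, and as a consequence the step where you rule out volume loss does not close.

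You assert that Shi proved $I_M(v') \ge I_{\text{euc}}(v')$ for every $v' > 0$ in a complete, asymptotically flat $3$-manifold with $R \ge 0$, with equality only for Euclidean $\R^3$. This is false: for any such manifold with strictly positive ADM mass (e.g.\ a metric with Schwarzschild end), one has $I_M(v) < I_{\text{euc}}(v)$ for all sufficiently large $v$---positive mass makes the manifold \emph{more} volume-efficient at large scales, which is exactly what Huisken's isoperimetric mass measures. What Shi actually proves in \cite{shi2016isoperimetric} is a conditional statement about domains with outer-minimizing boundary, not the unconditional profile comparison you invoke; it cannot be applied directly to the escaping pieces $\Omega_i\setminus B_{R_i}$. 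Because of this, your argument does not exclude $v_1>0$: the inequalities you do obtain,
\[
I_M(v) \ge I_M(v-v_1) + I_{\text{euc}}(v_1), \qquad I_M(v) \le I_M(v-v_1) + I_M(v_1),
\]
together with the always-available bound $I_M(v_1) \le I_{\text{euc}}(v_1)$ (place a near-Euclidean coordinate ball far out in the end), yield only that $I_M(v_1) = I_{\text{euc}}(v_1)$ and that $\Omega_\infty$ is isoperimetric for volume $v-v_1$. Passing from this equality to $v_1=0$ is precisely the delicate part of the Carlotto--Chodosh--Eichmair proof, and your ``rigidity clause'' is unsupported as stated. Note also that although the paper's Proposition B does eventually establish $I_M(v)\ge(1-2\eps)^{2/3}I_{\text{euc}}(v)$ under the near-Euclidean Sobolev hypothesis, it does so by applying the Willmore estimate to boundaries of isoperimetric regions, so invoking it here would be circular. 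Your treatment of the first variation and of De Giorgi regularity for the minimizing boundary is correct.
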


The continuity and differentiability properties of $I_M$ can be proved as in \cite{flores2014continuity}. 

\begin{prop}
The isoperimetric profile function $I_M$ is absolutely continuous and strictly increasing. The left and right derivatives $(I_M)_-'$ and $(I_M)_+'$ exist at every point and are equal almost everywhere. If $\Sigma = \bd \Omega$ is any isoperimetric region enclosing volume $v$ then one has 
\[
(I_M)'_-(v) \ge H(\Sigma) \ge (I_M)'_+(v).
\]
In particular, if $I_M$ is differentiable at $v$, then $I_M'(v) = H(\Sigma)$. 
\end{prop}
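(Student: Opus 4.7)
The plan is to carry out the standard variational arguments for the isoperimetric profile, as developed in \cite{flores2014continuity}, using the existence of isoperimetric regions at every volume (supplied by the preceding proposition) as the key input. I begin by establishing the one-sided derivative bounds. Fix $v>0$ and let $\Omega \subset M$ be an isoperimetric region of volume $v$ whose boundary $\Sigma = \bd\Omega$ has constant mean curvature $H$. Consider the normal variation $\{\Omega_t\}$ generated by the outward unit normal on $\Sigma$. Standard first- and second-variation formulas yield
\[
\vol(\Omega_t) = v + t\,\area(\Sigma) + O(t^2), \qquad \area(\bd\Omega_t) = \area(\Sigma) + tH\,\area(\Sigma) + O(t^2).
\]
Reparametrizing by volume gives the competitor bound $I_M(v+\tau) \le I_M(v) + H\tau + C\tau^2$ for $|\tau|$ small. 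Sending $\tau \to 0^+$ yields $(I_M)'_+(v) \le H$, and sending $\tau \to 0^-$ yields $(I_M)'_-(v) \ge H$; in particular $I_M'(v) = H$ wherever $I_M$ is differentiable.

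To upgrade these one-sided bounds to absolute continuity, I combine the previous inequality with a uniform local bound on the mean curvatures of isoperimetric surfaces of comparable volume. This uniform bound comes from a standard compactness argument: a sequence of isoperimetric regions with volumes $v_k \to v$ subconverges to an isoperimetric region at volume $v$, with associated mean curvatures converging. Hence on any bounded subinterval of $(0,\infty)$ one obtains the two-sided Lipschitz estimate $|I_M(v+\tau) - I_M(v)| \le C|\tau|$. Absolute continuity and Lebesgue's differentiation theorem then imply that the one-sided derivatives exist everywhere and coincide almost everywhere.

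Finally, for strict monotonicity I would show $H(\Sigma) > 0$ for every isoperimetric boundary $\Sigma$ in our setting. Weak monotonicity of $I_M$ (standard in the asymptotically flat case, see \cite{flores2014continuity}) together with $(I_M)'_+(v) \le H$ gives $H \ge 0$, and the strict inequality follows by ruling out closed minimal surfaces in $M$: any such surface would violate the strong isoperimetric control from Proposition \ref{iso}. Since $I_M' = H > 0$ almost everywhere, absolute continuity yields strict monotonicity. The main technical obstacle is the possible disconnectedness of $\Sigma$: the normal variation must be carried out consistently on each component while respecting the constant-volume constraint, and the uniform mean-curvature bound must hold across components. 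These points are addressed in detail in the references cited.
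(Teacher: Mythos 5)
The paper's ``proof'' of this proposition is simply a citation of \cite{flores2014continuity}, so your attempt to reconstruct the underlying variational argument is a reasonable thing to do and mostly tracks the standard route. The derivative bounds via the competitor inequality $I_M(v+\tau) \le I_M(v) + H\tau + C\tau^2$, followed by semi-concavity and a compactness argument to get a locally uniform mean-curvature bound, is indeed the skeleton of the Flores--Nardulli argument.

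However, there is a genuine logical gap in your treatment of strict monotonicity. To rule out closed minimal isoperimetric surfaces, you invoke Proposition \ref{iso}. But in the paper's logical order, Proposition \ref{iso} is a consequence of this proposition: the present statement feeds into Proposition \ref{diffineq}, which in turn is used to prove Proposition \ref{iso}. Citing Proposition \ref{iso} here is therefore circular. Moreover, the proposition you are proving is stated for a general asymptotically flat 3-manifold with non-negative scalar curvature (not only for manifolds satisfying the nearly-Euclidean Sobolev hypothesis), so the argument must not rely on the almost-sharp isoperimetric control at all. You need a self-contained reason why isoperimetric boundaries cannot have $H \equiv 0$ on a set of volumes of positive measure; in the asymptotically flat setting this is handled directly in \cite{flores2014continuity} using the competitor construction of placing a small Euclidean-like ball in the asymptotic end, not via any downstream sharpness of the isoperimetric constant. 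Replacing the appeal to Proposition \ref{iso} with the actual Flores--Nardulli argument (or with an independent observation that asymptotic flatness forces $I_M$ to be unbounded and hence not eventually constant, combined with a unique-continuation/analyticity argument in $v$) would fix the circularity. A second, smaller point: ``absolute continuity and Lebesgue's differentiation theorem'' give a.e.\ differentiability but not the existence of one-sided derivatives at \emph{every} point; for that you should explicitly extract local semi-concavity from the competitor bound (i.e., $I_M - Cv^2$ is locally concave), which is what actually yields everywhere-defined one-sided derivatives.
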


Next, we compute a differential inequality satisfied by $I_M$. 

\begin{prop}
\label{diffineq}
Assume that $\mathcal W(\Sigma) \ge 16\pi(1-\eps)^2$ for all connected, embedded surfaces in $M$. Then the differential inequality 
\[
(I_M)'_-(v) > \left(1-2{\eps}\right)\sqrt{\frac{16\pi}{I_M(v)}}
\]
holds for all $v > 0$.
\end{prop}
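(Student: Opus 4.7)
The plan is to exploit that isoperimetric boundaries are constant mean curvature (CMC) surfaces, then feed each connected component into the nearly optimal Willmore inequality from Proposition \ref{willmore}.

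First, given $v > 0$, I would invoke the existence result of Carlotto--Chodosh--Eichmair to obtain an isoperimetric region $\Omega \subset M$ with $\vol(\Omega) = v$ and $\area(\bd \Omega) = I_M(v)$. Write $\Sigma = \bd \Omega$ and decompose $\Sigma = \Sigma_1 \sqcup \cdots \sqcup \Sigma_k$ into its finitely many compact, connected, embedded components. Because $\Omega$ is isoperimetric, the first variation argument (using volume-preserving deformations supported near any component, possibly transferring volume between components) shows that $\Sigma$ has constant mean curvature $H$, the \emph{same} value on every component $\Sigma_j$. In particular $H^2$ is a constant on $\Sigma$, so $\int_\Sigma H^2\,da = H^2 \cdot I_M(v)$.

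Next, I would apply Proposition \ref{willmore} to each connected component separately. Since every $\Sigma_j$ is a compact, connected, embedded surface in $M$, it satisfies
\[
\int_{\Sigma_j} H^2\, da \ge 16\pi(1-\eps)^2.
\]
Summing over $j = 1,\dots,k$ yields
\[
H^2 \cdot I_M(v) \;=\; \int_\Sigma H^2\, da \;\ge\; 16\pi k (1-\eps)^2 \;\ge\; 16\pi (1-\eps)^2,
\]
and therefore $H \ge (1-\eps)\sqrt{16\pi/I_M(v)}$.

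Finally, I would invoke the earlier-cited inequality $(I_M)'_-(v) \ge H(\Sigma) \ge (I_M)'_+(v)$ from the regularity statement for the isoperimetric profile to get $(I_M)'_-(v) \ge H$, so
\[
(I_M)'_-(v) \;\ge\; (1-\eps)\sqrt{\frac{16\pi}{I_M(v)}} \;>\; (1-2\eps)\sqrt{\frac{16\pi}{I_M(v)}},
\]
where the strict inequality holds because $\eps > 0$. The only conceptual obstacle is justifying that a disconnected isoperimetric boundary carries a single common value of the mean curvature (so that $H^2$ pulls out of the integral as a global constant); this is a standard consequence of isoperimetric optimality via Lagrange multipliers, but is worth flagging explicitly since the Willmore bound is applied component-by-component.
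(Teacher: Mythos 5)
Your proof is correct and takes essentially the same approach as the paper: apply the nearly optimal Willmore inequality to a connected component of the (CMC) isoperimetric boundary and combine with the one-sided derivative bound $(I_M)'_-(v) \ge H$. The only cosmetic difference is that you sum the Willmore bound over all components (getting a factor of $k$ which you then discard), whereas the paper simply restricts attention to a single component and uses $\area(\Sigma_v) \ge \area(\Gamma_v)$; both are immediate given the common mean curvature across components, which you correctly flag as the key structural point.
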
 

\begin{proof} 
Fix some $v>0$ and let $\Sigma_v$ be an isoperimetric surface enclosing volume $v$. Then $I_M(v) = \area(\Sigma_v)$ and $(I_M)'_-(v) \ge H(\Sigma_v)$. Let $\Gamma_v$ be some connected component of $\Sigma_v$. By the Willmore bound, we get that 
\[
I_M(v) (I_M)'_-(v)^2 \ge \area(\Sigma_v) H(\Sigma_v)^2 \ge \area(\Gamma_v)H(\Gamma_v)^2 \ge 16\pi(1-\eps)^2.
\]
Thus we have 
\[
(I_M)'_-(v) \ge (1-\eps)\sqrt{\frac{16\pi}{I_M(v)}} > (1-2\eps)\sqrt{\frac{16\pi}{I_M(v)}},
\]
as needed. 
\end{proof}

Now observe that the solution $y$ to 
\[
y'(v) = (1-2\eps)\sqrt{\frac{16\pi}{y(v)}}, \quad y(0) = 0
\]
is given by
\[
y(v) = (36\pi)^{1/3} (1-2\eps)^{2/3} v^{2/3} = (1-2\eps)^{2/3} I_{\text{euc}}(v)
\]
where $I_{\text{euc}}$ is the Euclidean isoperimetric profile. 
Hence the previous differential inequality for $I_M$ gives rise to a comparison with the Euclidean isoperimetric profile. We can now give the proof of Proposition \ref{iso}. 

\begin{repropA}
Assume that $(M,g)$ is a complete, asymptotically flat manifold with vanishing scalar curvature.  Assume that $M$ is topologically $\R^3$ and that
\[
\inf \left\{\frac{\int \vert \grad u\vert^2\, dv}{\left(\int u^6\, dv\right)^{1/3}}: u\in W^{1,2}(M)\right\} \ge \Lambda - \delta
\]
where $\delta > 0$ is small. Then there is an $\eps = \eps(\delta)$ such that 
\[
I_M(v) \ge \left(1-2\eps\right)^{2/3} I_{\text{euc}}(v)
\]
for all $v > 0$. Here $\eps(\delta)\to 0$ as $\delta\to 0$. 
\end{repropA}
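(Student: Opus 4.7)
My plan is to combine Propositions \ref{willmore} and \ref{diffineq} with a simple ODE comparison argument. Since Proposition \ref{willmore} produces, for the given $\delta > 0$, an $\eps = \eps(\delta) \to 0$ such that $\mathcal W(\Sigma) \ge 16\pi(1-\eps)^2$ for every compact connected embedded surface in $(M,g)$, the hypothesis of Proposition \ref{diffineq} is automatically satisfied. Applying Proposition \ref{diffineq} to each component of an isoperimetric boundary then yields the pointwise differential inequality
\[
(I_M)'_-(v) \ge (1-2\eps)\sqrt{\tfrac{16\pi}{I_M(v)}} \quad \text{for all } v > 0.
\]

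The next step is to observe that the scaling of the Euclidean isoperimetric profile, $I_{\text{euc}}(v) = (36\pi)^{1/3} v^{2/3}$, suggests substituting $F(v) = I_M(v)^{3/2}$. Since $I_M$ is absolutely continuous and strictly increasing by the proposition preceding Proposition \ref{diffineq}, so is $F$, and at points of differentiability one has
\[
F'(v) = \tfrac{3}{2}\, I_M(v)^{1/2}\, (I_M)'(v) \ge \tfrac{3}{2}(1-2\eps)\sqrt{16\pi} = 6\sqrt{\pi}\,(1-2\eps).
\]
A parallel bound holds for $F'_-$ almost everywhere by the same computation.

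Now I would integrate this inequality using absolute continuity. The initial condition is $F(0^+) = 0$, which follows because on small scales $M$ is close to a chart of $\R^3$, so $I_M(v) \to 0$ as $v \to 0^+$ (one can bound $I_M(v)$ from above using small geodesic balls in any coordinate chart). Integrating from $0$ to $v$ gives
\[
I_M(v)^{3/2} = F(v) \ge 6\sqrt{\pi}\,(1-2\eps)\, v,
\]
which rearranges to $I_M(v) \ge (36\pi)^{1/3}(1-2\eps)^{2/3} v^{2/3} = (1-2\eps)^{2/3} I_{\text{euc}}(v)$, as desired.

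The main obstacle, such as it is, lies in the ODE comparison step: one has only a one-sided derivative inequality $(I_M)'_- \ge \cdots$ and must be sure that this is enough to conclude the integral inequality. Fortunately, the absolute continuity and monotonicity of $I_M$ provided by the preceding proposition make $F = I_M^{3/2}$ also absolutely continuous and monotone, so $F(v) = \int_0^v F'(s)\, ds$ holds with $F'$ defined almost everywhere, and the lower bound on $F'$ transfers directly to the stated lower bound on $F(v)$. Everything else is a direct substitution.
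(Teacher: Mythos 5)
Your proof is correct, but it takes a genuinely different route from the paper. The paper runs a barrier (first-crossing-point) argument: it first establishes $I_M(v) > (1-2\eps)^{2/3}I_{\text{euc}}(v)$ for small $v$ using the Euclidean small-volume asymptotics of the isoperimetric profile, then supposes the inequality fails somewhere and derives a contradiction at the first point $v_0$ where the two curves touch, comparing $(I_M)'_-(v_0)$ against $(1-2\eps)^{2/3}I_{\text{euc}}'(v_0)$. Your argument instead substitutes $F = I_M^{3/2}$ to linearize the ODE and integrates directly using absolute continuity. The two approaches buy slightly different things. Yours only needs $I_M(v)\to 0$ as $v\to 0^+$, which is weaker than invoking the full small-volume asymptotic $I_M(v)\sim I_{\text{euc}}(v)$, and the verification that $F$ is absolutely continuous is painless since $x\mapsto x^{3/2}$ is Lipschitz on bounded intervals. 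The paper's barrier argument trades integration for the need to have a strict differential inequality, which is why the statement of Proposition \ref{diffineq} is written with ``$>$'' (and a factor $(1-2\eps)$ instead of the tighter $(1-\eps)$) precisely so that the strict inequality survives at the touching point; your integration-based approach does not require that strictness. Both are standard ODE comparison techniques and both are sound; yours is arguably the more self-contained of the two.
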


\begin{proof}
Choose $\eps = \eps(\delta)$ according to Proposition \ref{willmore}, so that 
\begin{equation}
\label{ineq3}
(I_M)_-'(v) > (1-2\eps) \sqrt{\frac{16\pi}{I_M(v)}}
\end{equation}
holds for all $v > 0$ by Proposition \ref{diffineq}. By the asymptotics of the isoperimetric profile for small volumes, we know that $I_M(v) > (1-2\eps)^{2/3}I_{\text{euc}}(v)$ for all sufficiently small $v > 0$. Suppose for contradiction that for some $v_1 > 0$ one has $I_M(v_1) < (1-2\eps)^{2/3}I_{euc}(v_1)$. Then let 
\[
v_0 = \inf\{v> 0: I_M(v)\le (1-2\eps)^{2/3}I_{\text{euc}}(v)\} > 0. 
\]
Note that $I_M(v_0) = (1-2\eps)^{2/3} I_{\text{euc}}(v_0)$ and 
\[
(I_M)_-'(v_0) \le (1-2\eps)^{2/3} I_{\text{euc}}'(v_0) = (1-2\eps)\sqrt{\frac{16\pi}{(1-2\eps)^{2/3} I_{\text{euc}}(v_0)}}.
\]
This gives a contradiction to inequality (\ref{ineq3}). 
\end{proof}

As a corollary, we get a lower bound on the isoperimetric constant of $M$.

\begin{corollary}
\label{cor:iso} Let $M$ be as above. 
The isoperimetric constant $c_{\operatorname{iso}}$ for $M$ satisfies 
\[
c_{\operatorname{iso}}(M) \ge (1-2\eps)
^{2/3} (36\pi)^{1/3} = (1-2\eps)^{2/3} c_{\operatorname{iso}}(\R^3)
\]
where $\eps = \eps(\delta) \to 0$ as $\delta \to 0$. 
\end{corollary}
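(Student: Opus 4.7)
The plan is to derive Corollary \ref{cor:iso} as a direct, essentially one-line consequence of Proposition \ref{iso}, modulo unpacking the definition of the isoperimetric constant. Recall that for a 3-dimensional Riemannian manifold, the (Euclidean-scaled) isoperimetric constant can be written as
\[
c_{\operatorname{iso}}(M) = \inf_{\Omega \subset M}\frac{\area(\bd \Omega)}{\vol(\Omega)^{2/3}} = \inf_{v > 0}\frac{I_M(v)}{v^{2/3}},
\]
where the second equality follows by stratifying the infimum by volume. First I would note that on Euclidean space the round ball is the isoperimetric minimizer, so $I_{\operatorname{euc}}(v) = (36\pi)^{1/3} v^{2/3}$ and consequently $c_{\operatorname{iso}}(\R^3) = (36\pi)^{1/3}$.

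Second, I would invoke Proposition \ref{iso}, which yields the pointwise comparison $I_M(v) \ge (1-2\eps)^{2/3} I_{\operatorname{euc}}(v)$ for every $v>0$, where $\eps=\eps(\delta)\to 0$ as $\delta \to 0$. Dividing both sides by $v^{2/3}$ gives
\[
\frac{I_M(v)}{v^{2/3}} \ge (1-2\eps)^{2/3}\,\frac{I_{\operatorname{euc}}(v)}{v^{2/3}} = (1-2\eps)^{2/3}(36\pi)^{1/3}
\]
for every $v > 0$. Taking the infimum over $v$ yields the stated lower bound $c_{\operatorname{iso}}(M)\ge (1-2\eps)^{2/3}(36\pi)^{1/3} = (1-2\eps)^{2/3}c_{\operatorname{iso}}(\R^3)$.

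There is no real obstacle here: the content is entirely contained in Proposition \ref{iso}, and the corollary is just a reformulation. The one point worth double-checking is the agreement of conventions, namely that the ambient definition of $c_{\operatorname{iso}}$ used elsewhere in the paper is indeed $\inf_v I_M(v)/v^{2/3}$ (as opposed to, e.g., a Cheeger-type constant or a version normalized by diameter). Assuming this standard convention, the proof is complete.
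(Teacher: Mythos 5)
Your proof is correct and is precisely the intended one-line derivation: the paper states the corollary immediately after Proposition \ref{iso} with no separate argument, and your unpacking of $c_{\operatorname{iso}}(M)=\inf_{v>0} I_M(v)/v^{2/3}$ together with $I_{\operatorname{euc}}(v)=(36\pi)^{1/3}v^{2/3}$ is exactly what the "Corollary" label is asking the reader to supply.
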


\section{Convergence to Euclidean in the $d_p$ metric}

\label{sec:dp}

In this section, we prove the $d_p$ convergence of the blow up models to Euclidean space.  This will complete the proof of Theorem \ref{main}.
First, we give some further background on the $d_p$ distance. 
Lee-Naber-Neumayer proved an $\eps$-regularity theorem for $d_p$. To understand the statement, we need to introduce Perelman's $\nu$-functional. 

\begin{defn}
For a smooth function $f$ on $(M^3,g)$ and a parameter $\tau > 0$, Perelman's $W$ functional is defined by 
\[
W(g,f,\tau) = \frac{1}{(4\pi \tau)^{3/2}} \int_M \bigg[\tau(\vert \grad f\vert^2 + R) + f - 3\bigg] e^{-f}\, dv. 
\]
Perelman's entropy is defined by 
\[
\mu(g,\tau) = \inf\left\{W(g,f,\tau): \frac{1}{(4\pi \tau)^{3/2}} \int_M e^{-f} = 1\right\},
\]
and Perelman's $\nu$-functional is given by $\nu(g,\tau) = \inf_{t\in (0,\tau)} \mu(g,\tau)$. 
\end{defn} 

It is known that $\mu(g,\tau) \le 0$ and that equality holds if and only if $(M,g)$ is isometric to Euclidean space. Roughly speaking, Lee-Naber-Neumayer's $\eps$-regularity theorem states that if $R(g) \ge -\delta$ and $\nu(g,2)\ge -\delta$ for some small $\delta > 0$ then $(M,g)$ must be $\eps$-close to Euclidean space with respect to the $d_p$ metric at unit $d_p$ scale. 
As explained in \cite{lee2023dp} Remark 1.9, one can in fact replace a lower bound on Perelman's $\nu$-functional with good control over the isoperimetric constant. For the reader's convenience, we prove the following proposition in detail. The proof can be seen as an instance of the P\'olya-Szeg\H{o} principle. 

\begin{prop}
\label{entropy}
Assume that $(M,g)$ is a complete, asymptotically flat 3-manifold with vanishing scalar curvature. Assume that the isoperimetric constant of $M$ satisfies 
\[
c_{\operatorname{iso}}(M) \ge \eta c_{\operatorname{iso}}(\R^3)
\]
for some number $0 < \eta < 1$. Then $\mu(g,\tau) \ge C \log\eta$ for all $\tau > 0$. 
\end{prop}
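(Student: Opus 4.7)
The plan is to implement a P\'olya--Szeg\H{o}-style comparison that transfers an admissible test function $f$ on $M$ to a radially symmetric test function on $\R^3$, losing only a controlled amount in the $W$-functional. Since $R \equiv 0$, the functional reduces to
\[
W(g,f,\tau) = \frac{1}{(4\pi\tau)^{3/2}} \int_M \bigl[\tau |\grad f|^2 + f - 3\bigr] e^{-f} \, dv,
\]
subject to $\int_M e^{-f}\,dv = (4\pi\tau)^{3/2}$. Given such an $f$, I define its symmetric decreasing rearrangement: let $\mu(t) = \vol_g(\{f \le t\})$ and let $f^{\ast}\colon \R^3 \to \R$ be the radial nondecreasing function with $\{f^{\ast} \le t\} = B_{r(t)}$, where $\tfrac{4}{3}\pi r(t)^3 = \mu(t)$. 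By equimeasurability, the normalization $\int_{\R^3} e^{-f^{\ast}}\,dy = (4\pi\tau)^{3/2}$ and the identity $\int f e^{-f}\,dv = \int f^{\ast} e^{-f^{\ast}}\,dy$ are preserved exactly.

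The key inequality is the Dirichlet energy comparison
\[
\int_M |\grad f|^2 e^{-f}\,dv \;\ge\; \eta^2 \int_{\R^3} |\grad f^{\ast}|^2 e^{-f^{\ast}}\,dy.
\]
To prove this, I apply the coarea formula to rewrite the left side as $\int e^{-t} \bigl(\int_{\{f=t\}} |\grad f|\,dA\bigr)\,dt$, then use Cauchy--Schwarz in the form $\bigl(\int_{\{f=t\}} dA\bigr)^2 \le \mu'(t) \int_{\{f=t\}} |\grad f|\,dA$, together with the hypothesis $\mathrm{Per}(\{f \le t\}) \ge \eta\, c_{\mathrm{iso}}(\R^3)\, \mu(t)^{2/3}$. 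For $f^{\ast}$ the level sets are spheres and $|\grad f^{\ast}|$ is constant on each, so Cauchy--Schwarz becomes equality; one obtains $\int_{\{f^{\ast}=t\}} |\grad f^{\ast}|\,dA = c_{\mathrm{iso}}(\R^3)^2 \mu(t)^{4/3} / \mu'(t)$. Comparing layer by layer yields the claimed factor $\eta^2$.

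To return to an admissible Euclidean test function at the same scale $\tau$, I set $\tilde f(x) := f^{\ast}(\eta x) - 3\log\eta$. A direct change of variables shows (i) $\int_{\R^3} e^{-\tilde f}\,dx = (4\pi\tau)^{3/2}$, so $\tilde f$ is admissible; (ii) $\int |\grad \tilde f|^2 e^{-\tilde f}\,dx = \eta^2 \int |\grad f^{\ast}|^2 e^{-f^{\ast}}\,dy$; and (iii) $\int \tilde f e^{-\tilde f}\,dx = \int f^{\ast} e^{-f^{\ast}}\,dy - 3(\log\eta)(4\pi\tau)^{3/2}$. Combining these with the gradient comparison above gives
\[
W(g,f,\tau) \;\ge\; W_{\R^3}(\tilde f, \tau) + 3\log\eta.
\]
Since Perelman's entropy on Euclidean space satisfies $\mu_{\R^3}(\tau) = 0$, the right side is at least $3\log\eta$. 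Taking the infimum over admissible $f$ yields $\mu(g,\tau) \ge 3\log\eta$, proving the proposition with $C = 3$.

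The only subtle point is justifying the rearrangement on the noncompact asymptotically flat manifold $M$: since $e^{-f}$ is integrable, the distribution function $\mu(t)$ is well defined and finite for each $t$, and the inequality passes through an approximation of $f$ by smooth compactly supported truncations. Everything else is a bookkeeping computation.
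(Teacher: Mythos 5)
Your proposal is correct and follows essentially the same route as the paper: a P\'olya--Szeg\H{o}-style symmetrization using the coarea formula, Cauchy--Schwarz on level sets, and the isoperimetric hypothesis to obtain the factor $\eta^2$ in the Dirichlet term, followed by a rescaling that converts that $\eta^2$ loss into a $3\log\eta$ shift via the non-negativity of the Euclidean $W$-functional. The only differences are cosmetic. The paper substitutes $u^2 = e^{-f}$, performs the rearrangement on $u$ (superlevel sets, radially decreasing), and then rescales the function value by setting $\bar w = a\bar u$ while changing $\tau$ to $a^{4/3}$; you rearrange $f$ directly (sublevel sets), keep the weight $e^{-f}$ explicit in the coarea/Cauchy--Schwarz step, and rescale the spatial variable via $\tilde f(x) = f^*(\eta x) - 3\log\eta$ at fixed $\tau$. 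These are equivalent: your $\tilde f$ corresponds to $e^{-\tilde f/2}(x) = \eta^{3/2}\bar u(\eta x)$, which is the paper's dilation expressed as a change of variable rather than a change of scale parameter. Both land on $\mu(g,\tau) \ge 3\log\eta$ for all $\tau$. Your version is if anything slightly more transparent since the constraint $\int e^{-\tilde f} = (4\pi\tau)^{3/2}$ is preserved explicitly rather than being absorbed into a $\tau$-change.
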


\begin{proof} 
We need to prove a lower bound on 
\[
W(g,f,\tau) = \frac{1}{(4\pi \tau)^{3/2}}\int_M (\tau \vert \grad f\vert^2 + f - 3)e^{-f}\, dv
\]
for all $f$ satisfying the constraint 
\[
\frac{1}{(4\pi \tau)^{3/2}}\int_M e^{-f}\, dv = 1. 
\]
Substituting $u^2 = e^{-f}$, it is equivalent to prove a lower bound for 
\[
\frac{1}{(4\pi \tau)^{3/2}} \int_M 4\tau \vert \grad u\vert^2 - 2 u^2\log u - 3u^2\, dv
\]
subject to the constraint 
\[
\int_M u^2\, dv = (4\pi \tau)^{3/2}. 
\]
Since the isoperimetric estimate is scale invariant, we can assume without loss of generality that $\tau = 1$. 

Therefore assume that $u$ is a positive function on $M$ with 
\[
\int_M u^2 \, dv = (4\pi)^{3/2}. 
\]
Let $\bar u$ be the radially symmetric function on $\R^3$ such that 
\[
\vol_M\{u \ge t\} = \vol_{\R^3}\{\bar u \ge t\}
\]
for all $t > 0$. We have 
\begin{gather*}
\int_M u^2 \, dv = \int_0^\infty t^2 \left(\int_{\{u=t\}} \vert \grad u\vert\inv \, da\right) \,dt\\
\int_{\R^3} \bar u^2 \, dv = \int_0^\infty t^2\left(\int_{\{\bar u=t\}} \vert \grad \bar u\vert\inv \, da\right) \,dt.
\end{gather*}
Thus since 
\[
\int_{\{u=t\}} \vert \grad u\vert\inv \, da = \frac{d}{dt} \vol\{u\ge t\} = \frac{d}{dt} \vol\{\bar u\ge t\} = \int_{\{\bar u=t\}} \vert \grad \bar u\vert\inv \, da,
\]
we obtain 
\[
\int_M u^2 \, dv = \int_{\R^3} \bar u^2\, dv.
\]
Likewise we also have 
\[
\int_M u^2 \log u \, dv = \int_{\R^3} \bar u^2 \log \bar u\, dv.
\]
It remains to estimate the term with the gradient. We have 
\begin{gather*}
\int_M {\vert \grad u\vert^2}\, dv = \int_0^\infty  \left(\int_{\{u=t\}} \vert \grad u\vert\, da\right)\, dt,\\
\int_{\R^3} {\vert \grad \bar u\vert^2}\, dv = \int_0^\infty  \left(\int_{\{\bar u=t\}} \vert \grad \bar u\vert\, da\right)\, dt.
\end{gather*}
Moreover, by H\"older's inequality and the isoperimetric estimate, we obtain 
\begin{align*}
\left(\int_{\{u =t\}} \vert \grad u\vert\, da \int_{\{u=t\}} \vert \grad u\vert\inv \,da\right)^{1/2} &\ge \area(\{u=t\}) \\
&\ge \eta \area(\{\bar u=t\})\\
&= \eta \left(\int_{\{\bar u =t\}} \vert \grad \bar u\vert\, da \int_{\{\bar u=t\}} \vert \grad \bar u\vert\inv \,da\right)^{1/2}.
\end{align*}
It follows that 
\[
\int_M {\vert \grad u\vert^2}\, dv \ge \eta^2 \int_{\R^3} {\vert \grad \bar u\vert^2}\, dv.
\]
Then combining everything we get that 
\[
W(g,f,1) \ge \frac{1}{(4\pi)^{3/2}}\int_{\R^3} 4\eta^2 \vert \grad \bar u\vert^2 - 2\bar u^2 \log \bar u - 3\bar u^2\, dv. 
\]
It remains to get a lower bound on the right hand side. 

Recall that $W(g_{\text{euc}},f,\tau)\ge 0$ for all $\tau > 0$ and all $f$ satisfying the constraint. We will apply this at a slightly different scale $\tau$ to get the desired inequality. Fix a constant $a > 0$ and define $\bar w = a\bar u$ and $\tau = a^{4/3}$. Then 
\[
\int_{\R^3} \bar w^2\, dv = (4\pi)^{3/2}a^2 = (4\pi\tau)^{3/2}. 
\]
Therefore, we have 
\begin{align*}
0 &\le \frac{1}{(4\pi)^{3/2}a^2} \int_{\R^3} 4 a^{4/3} \vert \grad \bar w\vert^2 - 2\bar w^2 \log \bar w - 3\bar w^2\, dv \\
&= \frac{1}{(4\pi)^{3/2}} \int_{\R^3} 4a^{4/3} \vert \grad \bar u\vert^2 - 2\bar u^2 \log (a\bar u) - 3\bar u^2\, dv \\
&=  \frac{1}{(4\pi)^{3/2}} \int_{\R^3} 4a^{4/3} \vert \grad \bar u\vert^2 - 2\bar u^2 \log \bar u - 2\bar u^2\log(a) - 3\bar u^2\, dv.
\end{align*}
In particular, we have 
\[
\frac{1}{(4\pi)^{3/2}} \int_{\R^3} 4a^{4/3} \vert \grad \bar u\vert^2 - 2\bar u^2 \log \bar u - 3\bar u^2\, dv \ge \int_{\R^3} 2\bar u^2 \log(a)\, dv. 
\]
Now set $a = \eta^{3/2}$ to get 
\[
 \frac{1}{(4\pi)^{3/2}}\int_{\R^3} 4\eta^2 \vert \grad \bar u\vert^2 - 2\bar u^2 \log \bar u - 3\bar u^2\, dv \ge  3\log \eta \int_{\R^3} \bar u^2 \, dv = 3 (4\pi)^{3/2} \log \eta. 
\]
Therefore we have 
$
W(g,f,1) \ge C\log \eta
$
for all $f$ satisfying the constraint and the result follows. 
\end{proof}

Finally we can give the proof of the first main result. 

\begin{rerethm}
Assume that $(M_i,g_i)$ is a sequence of complete, asymptotically flat 3-manifolds with vanishing scalar curvature. Assume that each $M_i$ is topologically $\R^3$. Further suppose that 
\[
\inf\left\{\frac{\int_{M_i} \vert \grad u\vert^2\, dv}{\left(\int_{M_i} u^6\, dv\right)^{1/3}}: u\in W^{1,2}(M_i)\right\} \ge \Lambda - \delta_i
\]
where $\delta_i\to 0$. Then $M_i$ converges to Euclidean space in the $d_p$ sense for all $p\in(3,\infty)$. 
\end{rerethm}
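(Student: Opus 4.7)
The plan is to deduce Theorem \ref{main} as a direct application of the $\eps$-regularity theorem for the $d_p$ distance due to Lee-Naber-Neumayer \cite{lee2023dp}. Roughly, that theorem asserts that a complete 3-manifold $(M,g)$ with $R(g) \ge -\delta$ and $\nu(g,2) \ge -\delta$ is $\eps(\delta)$-close to Euclidean space at unit $d_p$-scale, both in Gromov-Hausdorff distance of unit $d_p$-balls and in volume, with $\eps(\delta) \to 0$ as $\delta \to 0$. The remaining work is to verify both hypotheses for the sequence $(M_i,g_i)$ with constants going to zero, and then to upgrade the unit-scale conclusion to every fixed radius $r > 0$ via rescaling.

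The scalar curvature hypothesis is immediate since $R(g_i) \equiv 0$. For the entropy hypothesis, I would chain the already established ingredients. Proposition \ref{iso} and Corollary \ref{cor:iso} yield $c_{\operatorname{iso}}(M_i) \ge \eta_i\, c_{\operatorname{iso}}(\R^3)$ with $\eta_i = (1-2\eps(\delta_i))^{2/3} \to 1$, and Proposition \ref{entropy} then produces $\mu(g_i,\tau) \ge C \log \eta_i$ uniformly in $\tau > 0$. Taking the infimum over $\tau \in (0,2)$ gives $\nu(g_i,2) \ge C \log \eta_i \to 0$, so both hypotheses of the $\eps$-regularity theorem hold with some $\delta_i' \to 0$. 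For each choice of base point $x_i \in M_i$, this delivers a Gromov-Hausdorff approximation of $(\mathcal{B}_{p,g_i}(x_i,1), d_{p,g_i})$ by the unit Euclidean $d_p$-ball, together with the matching volume comparison.

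To reach the conclusion for arbitrary $r > 0$, I would rescale. Under $g \mapsto \lambda^2 g$ the $L^2$ Sobolev inequality (\ref{ineq2}) is invariant, and the vanishing scalar curvature and topological hypotheses are preserved; meanwhile a direct computation shows that $d_p$ rescales by $\lambda^{(p-3)/p}$ and that volumes rescale by $\lambda^3$. Setting $\lambda = r^{-p/(p-3)}$ therefore turns a $d_{p,g_i}$-ball of radius $r$ into a unit $d_p$-ball in the rescaled metric, where the unit-scale result applies; rescaling back establishes both the Gromov-Hausdorff and the volume convergence demanded by the paper's definition of $d_p$ convergence.

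The main obstacle I anticipate is careful bookkeeping with the $\eps$-regularity machinery of \cite{lee2023dp}. Their theorem is formulated with a bound on $\nu(g,2)$, while our verification routes through an isoperimetric bound via Proposition \ref{entropy}, as suggested in Remark 1.9 of \cite{lee2023dp}; I would need to check that this substitution slots in cleanly and that the covering terms appearing in their Definition 2.44 are harmless for the asymptotically flat manifolds considered here, as noted after the definition of $d_p$ convergence in this paper. Once these formalities are handled, the proof is a brief assembly of the previously established propositions.
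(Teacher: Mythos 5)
Your proposal is correct and follows essentially the same route as the paper: verify $R(g_i)=0$, combine Corollary \ref{cor:iso} with Proposition \ref{entropy} to get $\nu(g_i,2)\to 0$, and invoke the Lee--Naber--Neumayer machinery. The paper simply cites their Theorem~1.7 directly (which already encapsulates the passage from unit scale to arbitrary radius $r$), whereas you reconstruct that step by a rescaling argument exploiting the scale invariance of the Sobolev hypothesis and the scaling law $d_{p,\lambda^2 g} = \lambda^{(p-3)/p} d_{p,g}$ — a correct but slightly more hands-on version of the same deduction.
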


\begin{proof} Let $(M_i,g_i)$ be as in the statement of the theorem and fix some $p > 3$. 
According to Corollary \ref{cor:iso}, the isoperimetric constants of $M_i$ satisfy $c_{\operatorname{iso}}(M_i)\ge \eta_i c_{\operatorname{iso}}(\R^3)$ for some constants $\eta_i \to 1$ as $i\to \infty$. Hence Proposition \ref{entropy} implies that $\nu(g_i,2) \to 0$ as $i\to \infty$.  Moreover, we have $R(g_i)= 0$ for all $i$. It now follows that $(M_i,g_i)$ converges to Euclidean space in the $d_p$ sense by Theorem 1.7 in \cite{lee2023dp}.
\end{proof}

\section{Convergence of Conformal Factors} 
\label{further}

In this section, we show the convergence of the conformal factors and prove Theorem \ref{main2}. 
Let $(M_i,g_i)$ be a sequence of asymptotically flat manifolds as in the statement of Theorem \ref{main2} and consider the solutions $w_i$ to 
\[
\lap_{g_i} w_i + \lambda_i w_i^5 = 0
\]
satisfying 
\[
\|w_i\|_{L^\infty(M_i)} = 1 \text{ and } \|w_i\|_{L^6(M_i)}^6 = 2\pi^2 \text{ and } \lambda_i\to \frac{3}{4}. 
\]
Multiplying the equation by $w_i$ and integrating by parts, we see that $\grad w_i$ is also uniformly bounded in $L^2(M_i)$. For each $i$, choose a point $x_i\in M_i$ such that $w_i(x_i)=1$.  

Fix some value of $p$. Choose $r_j \nearrow \infty$ and $\eps_j\searrow 0$ and $\kappa_j\searrow 1$ and $s_j\nearrow \infty$. By Theorem 6.1 in \cite{lee2023dp}, after passing to a subsequence $(M_j)$ of $(M_i)$, we can find a neighborhood $\Omega_j$ of $x_j$ and a smooth diffeomorphism $\psi_j:\Omega_j \to B(0,r_j)\subset \R^3$ with $\psi_j(x_j) = 0$ such that the estimates
\begin{gather*}
(1-\eps_j) \|\psi_j^*f\|_{L^{q/\kappa_j}(\Omega_j)} \le \|f\|_{L^q(B(0,r_j))} \le (1+\eps_j)\|\psi_j^*f\|_{L^{\kappa_j q}(\Omega_j)},\\
(1-\eps_j)\|\grad \psi_j^*f \|_{L^{q/\kappa_j}(\Omega_j)} \le \|\grad f\|_{L^q(B(0,r_j))} \le (1+\eps_j)\|\grad \psi_j^*f\|_{L^{\kappa_j q}(\Omega_j)}. 
\end{gather*} 
hold for all $q\in(\kappa_j,s_j)$ and all $f\in W^{1,q}(B(0,r_j))$. The diffeomorphisms $\psi_j$ in particular are $\eps_j$-Gromov-Hausdorff approximations in the $d_p$ distance.  

Define the functions 
\[
\tilde w_j\colon B(0,r_j)\to \R
\]
by $\tilde w_j = w_j\circ \psi_j\inv$. 
Morally, the $d_p$ convergence of $M_j$ to Euclidean space implies that the $W^{1,2}$ analysis on $M_j$ is close to the $W^{1,2}$ analysis on Euclidean space. We want to use this to prove uniform estimates on the functions $\tilde w_j$.  We now prove a sequence of lemmas.

The first lemma gives the existence of good cut-off functions.  
Define $\mathcal D_j(r) = \psi_j\inv(B(0,r))$.

\begin{lem}
Fix $0< r \le 1$ and $0 < t \le 1$. For all sufficiently large $j$, there exists a cut-off function $\eta\ge 0$ defined on $M_j$ such that 
\begin{itemize}
\item[(i)] $\eta \equiv 1$ on $\mathcal D_j(r)$,
\item[(ii)] $\eta \equiv 0$ outside $\mathcal D_j(r+t)$,
\item[(iii)] $\|(\eta + \vert \grad \eta\vert)\|_{L^6(M_j)} \le Ct\inv$.
\end{itemize}
Here $C$ is a constant that does not depend on $r$, $t$ or $j$.
\end{lem}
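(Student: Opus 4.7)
The plan is to transplant a standard Euclidean cut-off to $M_j$ via $\psi_j$ and then use the almost-isometric $L^q$ and gradient bounds listed in the previous paragraph to convert the Euclidean estimates into the desired bounds on $M_j$.

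First, I would take a smooth radial cut-off $\bar\eta \colon \R^3 \to [0,1]$ with $\bar\eta \equiv 1$ on $B(0,r)$, $\operatorname{supp}\bar\eta \subset B(0,r+t)$, and $|\grad \bar\eta| \le 2/t$. Because $r + t \le 2$, the annulus $B(0,r+t) \setminus B(0,r)$ has uniformly bounded Euclidean volume, so for every $q \in [1, 12]$ one obtains
\[
\|\bar\eta\|_{L^q(\R^3)} + \|\grad \bar\eta\|_{L^q(\R^3)} \le C\, t^{-1},
\]
where $t \le 1$ has been used to absorb the $\|\bar\eta\|_{L^q}$ term into $C t^{-1}$.

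Next, I would set $\eta = \bar\eta \circ \psi_j$ on $\Omega_j$ and extend $\eta$ by zero to the rest of $M_j$. Since $r_j \to \infty$, the ball $B(0, r+t)$ is compactly contained in $B(0, r_j) = \psi_j(\Omega_j)$ for all large $j$, so $\eta$ is smooth and compactly supported in $\Omega_j$. Properties (i) and (ii) are then immediate from the identity $\psi_j(\mathcal{D}_j(\rho)) = B(0,\rho)$.

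Finally, to obtain (iii), I would apply the almost-isometric inequalities displayed just before this lemma with $q = 6\kappa_j$. Since $\kappa_j \searrow 1$ and $s_j \nearrow \infty$, the exponent $6\kappa_j$ lies in the admissible range $(\kappa_j, s_j)$ for all sufficiently large $j$, and those inequalities give
\[
\|\eta\|_{L^6(M_j)} + \|\grad \eta\|_{L^6(M_j)} \le (1-\eps_j)^{-1}\bigl(\|\bar\eta\|_{L^{6\kappa_j}(B(0,r_j))} + \|\grad \bar\eta\|_{L^{6\kappa_j}(B(0,r_j))}\bigr) \le C\, t^{-1}
\]
once $j$ is large enough that $(1 - \eps_j)^{-1} \le 2$ and $6\kappa_j$ sits in a fixed compact subinterval of $(1,12)$. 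I do not expect any genuine obstacle: the entire content is packaged in the $d_p$ estimates, which are designed precisely to make this Euclidean-to-$M_j$ transfer quantitative. The only routine verification is that $6\kappa_j \in (\kappa_j, s_j)$ for all large $j$, which is exactly what ``sufficiently large $j$'' in the statement affords us.
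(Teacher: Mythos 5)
Your proposal is correct and follows essentially the same route as the paper: pull back a Euclidean cut-off $\bar\eta$ through $\psi_j$, extend by zero, and convert the $L^{6\kappa_j}(\R^3)$ bounds on $\bar\eta$ and $\grad\bar\eta$ into $L^6(M_j)$ bounds on $\eta$ and $\grad\eta$ via the almost-isometric $d_p$ estimates, with the admissibility of $q=6\kappa_j$ for large $j$ handled exactly as you note. The paper refines the gradient estimate slightly to $\|\grad\bar\eta\|_{L^{6\kappa_j}} \le C t^{\frac{1}{6\kappa_j}-1}$ by using that the annulus has volume of order $t$, but since $t\le 1$ this reduces to the cruder $Ct^{-1}$ that you use, so the two arguments are materially the same.
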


\begin{proof} 
Let $\tilde \eta$ be a cut-off function on $\R^3$ with $\tilde \eta \equiv 1$ on $B(0,r)$ and $\tilde \eta \equiv 0$ outside $B(0,r+t)$. We can select $\tilde \eta$ so that 
\[
\vert \grad \tilde \eta\vert \le C t\inv. 
\]
It follows that 
\[
\|\grad \tilde \eta\|_{L^{6\kappa_j}(\R^3)}\le Ct^{\frac{1}{6\kappa_j}-1} \le C t^{-1},
\]
for $j$ large enough. 
Hence we obtain 
\begin{align*}
\| \grad \eta\|_{L^6(M_j)} \le \frac{1}{1-\eps_j} \|\grad \tilde \eta\|_{L^{6\kappa_j}(\R^3)} \le Ct\inv.
\end{align*}
Finally note that 
\[
\|\eta\|_{L^6(M_j)} \le \frac{1}{1-\eps_j} \|\tilde \eta\|_{L^{6\kappa_j}(\R^3)} \le C.
\]
Combining these observations gives the lemma. 
\end{proof}

Next, we use a Moser iteration scheme to prove uniform non-concentration estimates for the functions $w_j$. Here we closely follow the presentation of Gilbarg and Trudinger (see \cite{gilbarg1977elliptic} Section 8.6). 

\begin{lem}
\label{moser}
There are positive constants $c$ and $\sigma$ such that for any $r > 0$ and any $s \ge 2$, we have 
\[
\|w_j\|_{L^s(\mathcal D_j(2r))} \ge \left(\frac{c}{r^{\sigma}}\right)^{-1/s}
\]
for all sufficiently large $j$.
\end{lem}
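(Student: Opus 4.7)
The plan is to apply a Moser iteration to the semilinear equation $-\lap_{g_j} w_j = \lambda_j w_j^5$ on $M_j$, using the fact that $w_j(x_j)=1$ to invert the resulting $L^s$-to-$L^\infty$ estimate. For $j$ large, three ingredients are uniformly controlled and drive the iteration: the global bound $w_j \le 1$, the Sobolev inequality on $M_j$ with constant $\Lambda - \delta_j$ uniformly close to $\Lambda$, and the cutoff function $\eta$ of the previous lemma, satisfying $\eta \equiv 1$ on $\mathcal{D}_j(r)$, $\eta \equiv 0$ outside $\mathcal{D}_j(r+t)$, and $|\grad \eta| \le C t^{-1}$.

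For $q \ge 2$, I would multiply the PDE by $\eta^2 w_j^{q-1}$ and integrate by parts. Young's inequality with $\varepsilon = (q-1)/2$ absorbs the cross term and yields
\[
\int_{M_j} \eta^2 w_j^{q-2} |\grad w_j|^2 \, dv \le \frac{2\lambda_j}{q-1}\int_{M_j} \eta^2 w_j^{q+4}\, dv + \frac{4}{(q-1)^2}\int_{M_j} w_j^q |\grad \eta|^2\, dv.
\]
The key simplification is that $w_j \le 1$ forces $w_j^{q+4} \le w_j^q$, collapsing the nonlinear term to a linear one. Combining this with the identity $4 \eta^2 |\grad(w_j^{q/2})|^2 = q^2 \eta^2 w_j^{q-2}|\grad w_j|^2$ and applying the Sobolev inequality to $\eta w_j^{q/2}$, I obtain the reverse H\"older step
\[
\|w_j\|_{L^{3q}(\mathcal{D}_j(r))} \le \bigl[ C q (1 + t^{-2}) \bigr]^{1/q} \|w_j\|_{L^q(\mathcal{D}_j(r+t))},
\]
with $C$ independent of $q$, $r$, $t$, and (for large $j$) of $j$.

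I would then iterate this estimate with exponents $q_k = s \cdot 3^k$, radii $r_k = r(1 + 2^{-k})$, and widths $t_k = r/2^{k+1}$. Since $\sum_k 1/q_k = 3/(2s)$ and $\sum_k k/q_k$ converge, the infinite product of iteration constants is explicitly bounded, ultimately by a quantity of the form $(c/r^\sigma)^{1/s}$ for absolute constants $c$ and $\sigma$ (one may take $\sigma = 3$); the factor $s^{3/(2s)}$ appearing in the product is uniformly bounded for $s \ge 2$ and absorbs into $c$. This yields the local estimate $\sup_{\mathcal{D}_j(r)} w_j \le (c/r^\sigma)^{1/s} \|w_j\|_{L^s(\mathcal{D}_j(2r))}$. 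Since $\psi_j(x_j) = 0 \in B(0,r)$, the point $x_j$ lies in $\mathcal{D}_j(r)$ and $w_j(x_j) = 1$; rearranging gives the claim. The principal technical obstacle is bookkeeping the iteration constants to ensure that the final bound is uniform both in $j$ and in $s$; this rests on the uniformity of the Sobolev constant (from $\delta_j \to 0$), of $\lambda_j$ (from $\lambda_j \to 3/4$), and of the cutoff gradient bound from the preceding lemma.
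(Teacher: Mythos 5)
Your overall strategy matches the paper's: test the equation against $\eta^2 w_j^{q-1}$, use $w_j\le 1$ to linearize the $w^{q+4}$ term, apply the uniform Sobolev inequality to obtain a reverse H\"older step, iterate over doubling annuli, and invert using $w_j(x_j)=1$ with $x_j\in\mathcal D_j(r)$. However, there is a genuine gap in the reverse H\"older step. You write that the cutoff from the previous lemma satisfies $|\grad\eta|\le Ct^{-1}$ pointwise, but that lemma only gives the integrated bound $\|(\eta+|\grad\eta|)\|_{L^6(M_j)}\le Ct^{-1}$. A pointwise gradient bound on $\eta=\tilde\eta\circ\psi_j$ is not available: the $d_p$ estimates control $\psi_j^*g_j$ only on a good/bad decomposition of $\Omega_j$, and on the small-volume bad pieces the metric distortion (hence $|\grad\eta|_{g_j}$) can be arbitrarily large. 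Consequently, the step
\[
\|w_j\|_{L^{3q}(\mathcal D_j(r))}\le\bigl[Cq(1+t^{-2})\bigr]^{1/q}\|w_j\|_{L^q(\mathcal D_j(r+t))}
\]
is not justified, because the term $\int w_j^q|\grad\eta|^2\,dv$ cannot be bounded by $Ct^{-2}\int_{\mathcal D_j(r+t)}w_j^q\,dv$.

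The fix is to split off the cutoff in $L^6$ by H\"older's inequality with $\tfrac12=\tfrac16+\tfrac13$: one estimates $\int(\eta+|\grad\eta|)^2 w_j^q\,dv\le\|(\eta+|\grad\eta|)\|_{L^6}^2\,\|w_j\|_{L^{3q/2}}^q$. This turns the reverse H\"older step into an exponent-\emph{doubling} step from $L^{3q/2}$ to $L^{3q}$ rather than a tripling from $L^q$ to $L^{3q}$; the cutoff's $L^6$ norm carries the factor $t^{-1}$ and the term $\int\eta^2 w_j^q\,dv$ is controlled by the same H\"older split. One then iterates with exponents doubling from $s$ (rather than tripling), and the product of the iteration constants still converges to a bound of the form $(c/r^\sigma)^{1/s}$. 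This is precisely how the paper organizes the argument; once you replace the pointwise cutoff bound with the correct $L^6$ bound and adjust the iteration accordingly, your proof goes through.
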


\begin{proof}
In this proof, we drop the subscript $j$ so that $(M_j,g_j)$ is denoted by $(M,g)$, $w_j$ is denoted by $w$, and so on. Let $\rho_k = r(1+ 2^{-k})$ for $k\in \{0,1,2,\hdots\}$.  Choose a cut-off function $\eta$ according to the previous lemma which is 1 on $\mathcal D(\rho_k)$ and $0$ outside of $\mathcal D(\rho_{k-1})$. Since $0\le w\le 1$, we have 
$
\lap w = -\lambda w^5 \ge -\lambda w \ge -w
$
and so 
\[
\lap w + w \ge 0.
\]
Fix some $\beta \ge 2$. Multiply the above inequality by $\eta^2 w^{\beta}$ and integrate by parts to get 
\[
\int_{M} \eta^2 w^{\beta+1} - 2\eta w^{\beta} g(\grad \eta, \grad w) - \beta \eta^2 w^{\beta-1} \vert \grad w\vert^2 \, dv \ge 0.  
\]
Note that 
\begin{align*}
\vert 2\eta w^{\beta} g(\grad \eta, \grad w)\vert &\le 2 \eta w^\beta \vert \grad \eta\vert \vert \grad w\vert \le \eps  \eta^2 w^{\beta -1} \vert \grad w\vert^2 + \frac{1}{ \eps} w^{\beta + 1} \vert \grad \eta\vert^2 
\end{align*} 
for any $\eps > 0$. Selecting $\eps = \beta/2$, this yields 
\[
\int_{M}  \eta^2 w^{\beta-1} \vert \grad w\vert^2 \, dv \le C \int_{M} (\eta^2 + \vert \grad \eta\vert^2)w^{\beta+1}\, dv.
\]
Let $f = w^{(\beta + 1)/2}$.  Then, with $\gamma = \beta+1$, the above estimate implies that  
\[
\int_{M} \eta^2 \vert \grad f\vert^2\,dv \le C\gamma^2 \int_{M} (\eta^2 + \vert \grad \eta\vert^2)f^2\, dv. 
\]
Now, since $M$ supports a uniform $L^2$ Sobolev inequality, we get that 
\begin{align*}
\| \eta f\|_{L^6(M)}^2 &\le C \int_{M} \vert \eta \grad f\vert^2 + \vert f \grad \eta\vert^2\, dv\\
&\le C \gamma^2 \int_M (\eta^2 + \vert \grad \eta\vert^2)f^2 \, dv + C\int_M \vert \grad \eta\vert^2 f^2\, dv \\
&\le C (1+\gamma^2)\int_M (\eta + \vert \grad \eta\vert)^2 f^2\, dv,
\end{align*}
where $C$ does not depend on $M$. Hence one has 
\begin{align*}
\| f\|_{L^6(\mathcal D(\rho_k))} &\le C (1+\gamma^2)^{1/2} \|(\eta + \vert \grad \eta\vert) f \|_{L^2(M)}\\
&\le C(1+\gamma^2)^{1/2} \| (\eta + \vert \grad \eta \vert) \|_{L^6(M)} \|f\|_{L^3(\mathcal D(\rho_{k-1}))}. 
\end{align*}
Re-expressed in terms of $w$, this implies that 
\[
\| w \|_{L^{3\gamma}(\mathcal D(\rho_k))} \le C^{2/\gamma} (1+\gamma^2)^{{1}/{\gamma}} \|(\eta + \vert \grad \eta\vert)\|_{L^6(M)}^{2/\gamma} \|w\|_{L^{3\gamma/2}(\mathcal D(r_{\rho-1}))}.
\]
By the previous lemma, one can choose $\eta$ so that 
\[
\|(\eta + \vert \grad \eta\vert)\|_{L^6(M)} \le \frac{C\cdot 2^k}{r}.
\]
Let $\chi = 2$. Starting with $\gamma_0 = 2s/3$ and then iterating the previous estimate with $\gamma_k = \gamma_0\chi^k$, we get 
\[
\lim_{k\to \infty} \|w\|_{L^{3\gamma_k}(\mathcal D(\rho_k))} \le \|w\|_{L^s(\mathcal D(\rho_0))} \prod_{k=0}^\infty \left(\frac{C\chi^{k} 2^k}{r}\right)^{2/\gamma_k}.
\]
The limit on the left is equal to $\|w\|_{L^\infty(\mathcal D(r))} = 1$. Also, one has  
\[
\prod_{k=0}^\infty (C\chi^{k} 2^k)^{2/\gamma_k} = \left[\prod_{k=0}^\infty \left(\frac{C\cdot 4^{3k}}{r}\right)^{1/2^k}\right]^{1/p} = \left(\frac{c}{r^\sigma}\right)^{1/s},
\]
for some positive constants $c$ and $\sigma$. 
Therefore it follows that  
\[
\|w\|_{L^p(\mathcal D(2r))} \ge \left(\frac{c}{r^\sigma}\right)^{-1/s},
\]
and the lemma is proven. 
\end{proof} 

The final lemmas show that the $d_p$ convergence also gives control over the inner products of functions. 

\begin{lem}
\label{inner-prod}
Fix some large $j$. Consider a pair of $W^{1,2}$ functions $\tilde f_1$ and $\tilde f_2$ on $B(0,r_j)$ and let $f_1 = \tilde f_1\circ \psi_j$ and $f_2 = \tilde f_2\circ \psi_j$. Then the inequality
\[
\left\vert \int_{B(0,r)} g_{\text{euc}}(\grad \tilde f_1, \grad \tilde f_2)\, dv_{\text{euc}} - \int_{\Omega_j} g_j(\grad f_1, \grad f_2)\, dv_{g_j}\right\vert \le \eps_j \|\grad \tilde f_1\|_{L^2(B(0,r_j))} \| \grad \tilde f_2\|_{L^{2\kappa_j}(B(0,r_j))}
\]
holds.
\end{lem}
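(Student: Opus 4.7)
The plan is to prove this lemma by polarization combined with the quasi-isometry estimates for $\psi_j$. Applying the pointwise identity $g(X,Y) = \frac{1}{4}(\vert X+Y\vert^2 - \vert X-Y\vert^2)$ to both integrands and subtracting reduces the claim to controlling, for each $\tilde h \in \{\tilde f_1 + \tilde f_2, \tilde f_1 - \tilde f_2\}$ and $h = \tilde h \circ \psi_j$, the squared-norm discrepancy
\[
D(\tilde h) = \bigl\vert \|\grad \tilde h\|_{L^2(B(0,r_j))}^2 - \|\grad h\|_{L^2(\Omega_j)}^2 \bigr\vert.
\]

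To bound $D(\tilde h)$, I would combine the given quasi-isometry estimates applied at the three exponents $q = 2,\ 2\kappa_j,\ 2/\kappa_j$. The direct form at $q=2$ sandwiches $\|\grad \tilde h\|_{L^2(B(0,r_j))}$ between $(1-\eps_j)\|\grad h\|_{L^{2/\kappa_j}(\Omega_j)}$ and $(1+\eps_j)\|\grad h\|_{L^{2\kappa_j}(\Omega_j)}$, while the dual form (obtained by inverting the cases $q = 2\kappa_j$ and $q = 2/\kappa_j$) sandwiches $\|\grad h\|_{L^2(\Omega_j)}$ between $(1+\eps_j)^{-1}\|\grad \tilde h\|_{L^{2/\kappa_j}(B(0,r_j))}$ and $(1-\eps_j)^{-1}\|\grad \tilde h\|_{L^{2\kappa_j}(B(0,r_j))}$. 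Pairing these sandwich bounds asymmetrically yields an upper bound on $D(\tilde h)$ whose dominant contribution has the form $O(\eps_j)\,\|\grad \tilde h\|_{L^2(B(0,r_j))}\|\grad \tilde h\|_{L^{2\kappa_j}(B(0,r_j))}$, together with residual terms comparing $L^p$-norms of $\grad \tilde h$ at the exponents $2/\kappa_j$, $2$, and $2\kappa_j$ on $B(0,r_j)$ and $\Omega_j$.

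These residuals I would control by H\"older's inequality on the finite-volume domains $B(0,r_j)$ and $\Omega_j$. The resulting H\"older losses are powers of the volumes, such as $\vert B(0,r_j)\vert^{\kappa_j-1}$, which remain bounded---and in fact tend to $1$---provided the sequences $\kappa_j \searrow 1$ and $r_j \nearrow \infty$ are arranged so that $(\kappa_j - 1)\log r_j \to 0$, a freedom already built into the construction of these sequences at the start of the section. Finally, setting $\tilde h = \tilde f_1 \pm \tilde f_2$, summing the polarization identity, and using the triangle inequality in the $L^2$ and $L^{2\kappa_j}$ norms rearranges the cross-terms to produce the stated bound $\eps_j \|\grad \tilde f_1\|_{L^2(B(0,r_j))}\|\grad \tilde f_2\|_{L^{2\kappa_j}(B(0,r_j))}$. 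The main technical difficulty will be the bookkeeping of these H\"older losses and the orchestration of the asymmetric sandwich bounds so that the final estimate exhibits the slightly lopsided form---$L^2$ on one factor, $L^{2\kappa_j}$ on the other---rather than a symmetric product.
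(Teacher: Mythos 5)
Your polarization reduction is legitimate: $g(X,Y) = \tfrac{1}{4}(\vert X+Y\vert^2 - \vert X-Y\vert^2)$ does reduce the claim to bounding the squared-norm discrepancies $D(\tilde h_\pm)$ for $\tilde h_\pm = \tilde f_1 \pm \tilde f_2$. The gap is in the step where you claim to control $D(\tilde h)$ from the stated quasi-isometry estimates plus H\"older on finite-volume domains. The displayed estimates only relate norms across an exponent shift: $\|\grad \tilde h\|_{L^q}$ is sandwiched between $\|\grad h\|_{L^{q/\kappa_j}}$ and $\|\grad h\|_{L^{\kappa_j q}}$, never between two norms at the \emph{same} exponent. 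To turn this into an estimate for $\bigl\vert \|\grad \tilde h\|^2_{L^2} - \|\grad h\|^2_{L^2}\bigr\vert$ you would need, at some point, to bound a lower-exponent norm of $\grad \tilde h$ (or of $\grad h$) \emph{from below} by a higher-exponent norm of the same function on the same domain. That is a reverse H\"older inequality, and it does not hold in general: H\"older on a finite-volume domain only gives $\|u\|_{L^p} \le \vert D\vert^{1/p-1/q}\|u\|_{L^q}$ for $p<q$, so every H\"older application in your plan produces an inequality facing the wrong direction. Concretely, the sandwiches $A_2 \le (1+\eps_j)A'_{2\kappa_j}$ and $A'_2 \ge (1+\eps_j)^{-1}A_{2/\kappa_j}$ leave a gap between $A'_{2\kappa_j}$ and $A_{2/\kappa_j}$ that can be arbitrarily large for a function whose gradient is concentrated, and no arrangement of the three exponents closes it. The "residual" terms you describe are thus not small H\"older corrections but the whole obstruction.

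The paper's proof does not attempt to work from the $L^q$ quasi-isometry estimates at all. Instead it reopens the construction of $\psi_j$ from \cite{lee2023dp} and uses the finer information that $\Omega_j$ decomposes into pieces $\mathcal G^k$ on which $\phi_j^* g_j$ is \emph{pointwise} within a factor $(1\pm\sigma_j)^{k+1}$ of $g_{\text{euc}}$, together with the geometric volume decay $\vol_{\text{euc}}(\phi_j^* \mathcal G^k) \le (1+\sigma_j)^k\sigma_j^{k-1}$. It then bounds the integrand difference pointwise on each $\mathcal G^k$, applies H\"older on each piece (this is where the asymmetric $L^2$/$L^{2\kappa_j}$ product arises, with the extra factor $\vol(\mathcal G^k)^{1/\kappa_j'}$ absorbed by the decay), and sums a geometric series. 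This pointwise metric control is genuinely stronger than the $L^q$-norm comparisons and is what makes the cross-term estimate possible. To repair your argument you would need to import that decomposition rather than rely on the norm sandwiches.
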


\begin{proof} Let $\phi_j = \psi_j\inv$. As in the proof of Theorem 6.1 in \cite{lee2023dp}, we can suppose that the set $\Omega_j$ admits a decomposition 
\[
\Omega_j = \bigcup_{k=1}^\infty \mathcal G^k\cup \mathcal A
\]
where 
\begin{gather*}
(1-\sigma_j)^{k+1}g_{\text{euc}} \le \phi_j^* g_j \le (1+\sigma_j)^{k+1} g_{\text{euc}}
\end{gather*}
for all $x\in \phi_j^* \mathcal G^k$, and moreover, 
\begin{gather*} 
\vol_{\text{euc}}(\phi_j^*\mathcal A) = 0,\\
\vol_{\text{euc}}(\phi_j^* \mathcal G^k) \le (1+\sigma_j)^k\sigma_j^{k-1},
\end{gather*}
for all $k\ge 2$. Here $\sigma_j$ is a very small number to be specified later.

We can now compute that 
\begin{align*}
&\left\vert \int_{B(0,r)} g_{\text{euc}}(\grad \tilde f_1, \grad \tilde f_2)\, dv_{\text{euc}} - \int_{\Omega_j} g_j(\grad f_1, \grad f_2)\, dv_{g_j}\right\vert \\
&\qquad = \left\vert \int_{B(0,r)} \delta^{lm} \bd_{l} \tilde f_1 \bd_{m}\tilde f_2 - (\phi^*g_j)^{lm} \bd_l \tilde f_1 \bd_m \tilde f_2 \sqrt{\det{\phi^* g_j}} \, dx\right\vert \\
&\qquad \le \int_{B(0,r)} \left\vert \delta^{lm} - (\phi^*g_j)^{lm}\sqrt{\det{\phi^*g_j}}\right\vert \vert \grad \tilde f_1\vert \vert \grad \tilde f_2\vert\, dx \\
&\qquad \le C \sum_{k=1}^\infty (1+\sigma_j)^{10k+1} \int_{\phi^*\mathcal G^k \cap B(0,r)} \vert \grad \tilde f_1\vert \vert \grad \tilde f_2\vert\, dx.
\end{align*}
Next choose $\kappa_j' = \kappa_j/(\kappa_j-1)$ and apply H\"older's inequality to get 
\begin{align*}
&\sum_{k=1}^\infty (1+\sigma_j)^{10k+1} \int_{\phi_j^*\mathcal G^k \cap B(0,r)} \vert \grad \tilde f_1\vert \vert \grad \tilde f_2\vert\, dx \\
&\qquad \le \sum_{k=1}^\infty (1+\sigma_j)^{10k+1} \left(\int_{\phi_j^*\mathcal G^k \cap B(0,r)} \vert \grad \tilde f_1\vert^2 \, dx\right)^{1/2} \left(\int_{\phi_j^*\mathcal G^k \cap B(0,r)} \vert \grad \tilde f_2\vert^2 \, dx\right)^{1/2}\\
&\qquad \le \sum_{k=1}^\infty (1+\sigma_j)^{10k+1} \vol_{\text{euc}}(\mathcal G^k)^{1/\kappa_j'}\|\grad \tilde f_1\|_{L^2(B(0,r))} \|\grad \tilde f_2\|_{L^{2\kappa_j}(B(0,r))}.
\end{align*} 
Finally observe that 
\[
\sum_{k=1}^\infty (1+\sigma_j)^{10k+1} \vol_{\text{euc}}(\mathcal G^k)^{1/\kappa_j'} \le \eps_j
\]
provided $\sigma_j$ is chosen small enough. 
\end{proof}

\begin{lem}
\label{inner-prod2} Fix some large $j$. Consider a pair of $L^6$ functions $\tilde f_1$ and $\tilde f_2$ on $B(0,r_j)$ and let $f_1 = \tilde f_1\circ \psi_j$ and $f_2 = \tilde f_2\circ \psi_j$. Then the inequality
\[
\left\vert \int_{B(0,r_j)} \tilde f_1^5 \tilde f_2 \, dv_{\text{euc}} - \int_{\Omega_j} f_1^5 f_2 \, dv_{g_j}\right\vert \le \eps_j \| \tilde f_1\|_{L^6(B(0,r_j))}^5 \|  \tilde f_2\|_{L^{6\kappa_j}(B(0,r_j))}
\]
holds.
\end{lem}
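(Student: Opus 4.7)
The plan is to mimic the proof of Lemma \ref{inner-prod} essentially verbatim, substituting the $W^{1,2}$ Hölder bookkeeping with $L^6$ Hölder bookkeeping. First I would pass to the Euclidean side via pullback: since $\phi_j = \psi_j\inv$ and $f_i = \tilde f_i \circ \psi_j$, a change of variables gives
\[
\int_{\Omega_j} f_1^5 f_2 \, dv_{g_j} = \int_{B(0,r_j)} \tilde f_1^5\, \tilde f_2\, \sqrt{\det \phi_j^* g_j}\, dx,
\]
so the quantity to estimate reduces to $\int_{B(0,r_j)} \tilde f_1^5 \tilde f_2 \bigl(1 - \sqrt{\det \phi_j^* g_j}\bigr)\, dx$.

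Next I would invoke the same decomposition $\Omega_j = \bigcup_{k\ge 1} \mathcal G^k \cup \mathcal A$ that was used in Lemma \ref{inner-prod}: the metric satisfies $(1-\sigma_j)^{k+1}g_{\text{euc}} \le \phi_j^*g_j \le (1+\sigma_j)^{k+1}g_{\text{euc}}$ on $\phi_j^*\mathcal G^k$, and the sets $\mathcal G^k$ for $k\ge 2$ have small Euclidean measure $\vol_{\text{euc}}(\phi_j^*\mathcal G^k)\le (1+\sigma_j)^k\sigma_j^{k-1}$, while $\mathcal A$ is negligible. From the metric comparison one gets the pointwise bound $\bigl\lvert 1-\sqrt{\det \phi_j^*g_j}\bigr\rvert \le C(1+\sigma_j)^{3(k+1)/2}$ on $\phi_j^*\mathcal G^k$, which for $k=1$ further improves to $O(\sigma_j)$.

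To turn this into the desired product of norms I would apply a three-factor Hölder inequality with exponents $(6/5, 6\kappa_j, 6\kappa_j')$ where $\kappa_j' = \kappa_j/(\kappa_j-1)$; these satisfy $\tfrac{5}{6} + \tfrac{1}{6\kappa_j} + \tfrac{1}{6\kappa_j'} = 1$. Applied to the three factors $\lvert\tilde f_1\rvert^5$, $\lvert\tilde f_2\rvert$, $\chi_{\mathcal G^k}$, this yields
\[
\int_{\phi_j^*\mathcal G^k \cap B(0,r_j)} \lvert\tilde f_1\rvert^5\lvert\tilde f_2\rvert\, dx \le \|\tilde f_1\|_{L^6(B(0,r_j))}^5\, \|\tilde f_2\|_{L^{6\kappa_j}(B(0,r_j))}\, \vol_{\text{euc}}(\mathcal G^k)^{1/(6\kappa_j')},
\]
exactly parallel to the $1/\kappa_j'$ factor appearing in Lemma \ref{inner-prod}.

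Finally I would sum over $k$: the series $\sum_{k\ge 1} (1+\sigma_j)^{3(k+1)/2}\vol_{\text{euc}}(\mathcal G^k)^{1/(6\kappa_j')}$ is controlled exactly as in the previous lemma, using the geometric decay of $\vol_{\text{euc}}(\mathcal G^k)$ for $k\ge 2$ together with the $O(\sigma_j)$ refinement on $\mathcal G^1$; choosing the parameter $\sigma_j$ sufficiently small (relative to $\eps_j$, $\kappa_j$, and $r_j$) makes the full sum bounded by $\eps_j$. The main bookkeeping subtlety, and really the only one, is verifying that the three Hölder exponents sum correctly so the $\|\tilde f_1\|_{L^6}^5$ norm on the first factor and the $\|\tilde f_2\|_{L^{6\kappa_j}}$ norm on the second factor are the ones that appear in the stated inequality; once that is checked, the rest of the argument is essentially a quotation of the previous lemma.
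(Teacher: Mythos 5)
Your proposal is correct and does exactly what the paper intends by ``entirely analogous to the previous lemma'': you reuse the decomposition $\Omega_j = \bigcup_k \mathcal G^k \cup \mathcal A$, the pointwise metric comparison, and the geometric-series summation, with the only new ingredient being the three-factor H\"older inequality with exponents $(6/5, 6\kappa_j, 6\kappa_j')$, which you correctly verify sum to one so that $\|\tilde f_1\|_{L^6}^5$, $\|\tilde f_2\|_{L^{6\kappa_j}}$, and $\vol_{\text{euc}}(\mathcal G^k)^{1/(6\kappa_j')}$ emerge. Your explicit note that the $k=1$ integrand bound improves to $O(\sigma_j)$ is also the right observation, since $\mathcal G^1$ carries most of the volume and only the higher strata benefit from the $\sigma_j^{k-1}$ volume decay.
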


\begin{proof}
This is entirely analogous to the previous lemma. 
\end{proof}

We can now prove Theorem \ref{main2}. 

\begin{rerethm}
Assume that $(M_i,g_i)$ is a sequence of complete, asymptotically flat 3-manifolds with vanishing scalar curvature. Assume that each $M_i$ is topologically $\R^3$, and that the optimal constant in the $L^2$ Sobolev inequality on $M_i$ is approaching the Euclidean constant. Suppose that $M_i$ carries a positive solution to 
\[
\lap_{g_i} w_i + \lambda_i w_i^5 = 0
\]
with 
\[
\|w_i\|_{L^\infty(M_i)} = 1 \text{ and } \|w_i\|_{L^6(M_i)}^6 = 2\pi^2 \text{ and } \lambda_i \to \frac 3 4. 
\]
Let $\psi_i$ be the diffeomorphisms described above  and let $\tilde w_i = w_i \circ \psi_i\inv$. Then for any $q < 2$ and any $1 \le s < \infty$, the functions $\tilde w_i$ converge weakly in $W^{1,q}_{\text{loc}}(\R^3)$ and strongly in $L^s_{\text{loc}}(\R^3)$ to the function
\[
\tilde w(x) =  \sqrt{\frac{4}{4+\vert x\vert^2}}.
\]
\end{rerethm}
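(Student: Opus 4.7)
The plan is to establish uniform estimates on $\tilde w_i$ that allow passage to a weak limit, identify the limit equation, show the limit is non-trivial, and classify it. Testing $\lap_{g_i} w_i + \lambda_i w_i^5 = 0$ against $w_i$ and integrating by parts yields $\int_{M_i}\vert\grad w_i\vert^2\,dv = \lambda_i\int_{M_i} w_i^6\,dv\le C$, so $w_i$ is uniformly bounded in $W^{1,2}(M_i)\cap L^\infty(M_i)$. Pulling back under $\psi_i$ via the $L^q$ and gradient comparison estimates from the start of Section \ref{further}, and using H\"older on bounded pieces to accommodate the factors $\kappa_i>1$, gives uniform $W^{1,q}_{\text{loc}}(\R^3)$ bounds on $\tilde w_i$ for every $q<2$. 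A Rellich--Kondrachov and diagonal argument then produces a subsequence with $\tilde w_i\to\tilde w$ weakly in $W^{1,q}_{\text{loc}}$ and strongly in $L^s_{\text{loc}}$ for every $q<2$ and $s<\infty$, with $0\le\tilde w\le 1$ and $\int_{\R^3}\tilde w^6\,dv\le 2\pi^2$.

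Next I would verify that $\tilde w$ weakly satisfies $\lap\tilde w + \tfrac{3}{4}\tilde w^5 = 0$ on $\R^3$. Given $\tilde\varphi\in C^\infty_c(\R^3)$, its support lies in $B(0,r_i)$ for $i$ large, so $\varphi_i = \tilde\varphi\circ\psi_i$ is admissible on $M_i$ and the weak form of the PDE reads
\[
-\int_{M_i} g_i(\grad w_i,\grad\varphi_i)\,dv_{g_i} + \lambda_i\int_{M_i} w_i^5\varphi_i\,dv_{g_i} = 0.
\]
Applying Lemma \ref{inner-prod} to the gradient term and Lemma \ref{inner-prod2} to the nonlinear term, each with error $\eps_i\to 0$, rewrites this up to $o(1)$ as a purely Euclidean weak-form statement paired against $\tilde\varphi$. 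The gradient integral converges by weak $W^{1,q}$ convergence against the fixed smooth test function; the nonlinear integral converges because strong $L^s_{\text{loc}}$ convergence together with $\tilde w_i\le 1$ gives $\tilde w_i^5\to\tilde w^5$ in $L^s_{\text{loc}}$ by dominated convergence. Since $\lambda_i\to 3/4$, the claimed distributional equation holds, and by elliptic bootstrap $\tilde w$ is a smooth classical solution.

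To show $\tilde w\not\equiv 0$ I would invoke Lemma \ref{moser}. By construction $\psi_i(x_i)=0$ and $w_i(x_i)=1=\|w_i\|_{L^\infty}$, so Lemma \ref{moser} yields $\|w_i\|_{L^s(\mathcal D_i(2r))}\ge (c/r^\sigma)^{-1/s}$ for every $r>0$ and $s\ge 2$, with $c,\sigma$ independent of $i$. The $L^s$ comparison transfers this to a uniform lower bound on $\|\tilde w_i\|_{L^{s'}(B(0,2r))}$ with $s'=s\kappa_i\to s$, and passage to the limit via strong $L^s_{\text{loc}}$ convergence gives $\|\tilde w\|_{L^s(B(0,2r))}\ge (c/r^\sigma)^{-1/s}$ for every $r$ and $s$. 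Letting $s\to\infty$ with $r$ fixed forces $\|\tilde w\|_{L^\infty(B(0,2r))}\ge 1$ for every $r>0$, so combined with $\tilde w\le 1$ the smooth function $\tilde w$ attains the value $1$ at the origin and is in particular non-trivial.

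Finally, the Caffarelli--Gidas--Spruck classification (equivalent to the Aubin--Talenti classification of $L^2$ Sobolev extremizers on $\R^3$) asserts that every positive smooth $L^6$ solution of $\lap u + \tfrac{3}{4} u^5 = 0$ on $\R^3$ is a bubble from an explicit two-parameter family, indexed by a center $x_0\in\R^3$ and a width. The normalization $\tilde w(0) = 1 = \max\tilde w$ pins down both parameters and yields $\tilde w(x) = \sqrt{4/(4+\vert x\vert^2)}$; a direct integration confirms $\int_{\R^3}\tilde w^6\,dv = 2\pi^2$, so no $L^6$ mass escapes in the limit. The main obstacle in this program is the non-concentration step: at the critical Sobolev exponent one must rule out the scenario in which $w_i$ concentrates at $x_i$ on scales much finer than those resolved by the $d_p$ diffeomorphisms, which would force $\tilde w_i\to 0$ locally despite $\tilde w_i(0)=1$. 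The uniform Moser iteration is decisive precisely because its constants depend only on the Sobolev constant on $M_i$, which is controlled by hypothesis.
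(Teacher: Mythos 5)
Your proposal is correct and agrees with the paper's proof through the first three stages: the uniform $W^{1,2}(M_i)$ bounds transferred to $W^{1,q}_{\text{loc}}(\R^3)$ bounds on $\tilde w_i$, the weak-limit extraction, the verification of the limit PDE via Lemmas \ref{inner-prod} and \ref{inner-prod2}, and the use of Lemma \ref{moser} to force $\tilde w(0) = 1 = \|\tilde w\|_{L^\infty}$. Where you diverge is the final classification step. The paper does \emph{not} invoke Caffarelli--Gidas--Spruck; instead it first proves $\|\tilde w\|_{L^6(\R^3)}^6 \le 2\pi^2$ by comparing with the $L^6$ normalization on $M_j$, then shows $\grad\tilde w\in L^2(\R^3)$ globally by a cutoff argument, derives the energy identity $\int_{\R^3}|\grad\tilde w|^2\,dv = \tfrac{3}{4}\int_{\R^3}\tilde w^6\,dv$, and applies the Euclidean Sobolev inequality to obtain the reverse bound $\|\tilde w\|_{L^6}^6\ge 2\pi^2$; equality in Sobolev forces $\tilde w$ to be a Sobolev extremizer, and the classification of extremizers (Aubin--Talenti) completes the proof. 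Your route replaces this chain with the single (but deeper) Caffarelli--Gidas--Spruck theorem, which classifies \emph{all} positive $C^2$ solutions of the critical equation on $\R^3$ with no integrability hypothesis; this makes the $L^6$ upper bound, the global gradient estimate, and the energy identity unnecessary, since only non-triviality and the normalization $\tilde w(0)=1=\max\tilde w$ are needed to pin down the bubble. Both arguments are valid; yours is shorter but leans on a stronger off-the-shelf result, while the paper's is more self-contained and recovers the sharp $L^6$ mass directly. One small caveat: you should note that $\tilde w>0$ (not merely $\tilde w\ge 0$) follows from the strong maximum principle applied to the superharmonic $\tilde w$, since CGS is stated for positive solutions; and your parenthetical that CGS is ``equivalent'' to the Aubin--Talenti classification understates it---CGS is strictly stronger, which is precisely why your route is shorter.
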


\begin{proof} Pass to a subsequence $(M_j)$ as indicated above. 
Note that $\grad w_j$ is uniformly bounded in $L^2(M_j)$. Moreover, for any $r > 0$, we have 
\[
\int_{\mathcal B_{p,g_j}(x_j,r)} w_j^2\, dv_j \le  \vol_{g_j}(\mathcal B_{p,g_j}(x_j,r))^{\frac 2 3} \left(\int_{\mathcal B_{p,g_j}(x_j,r)} w_j^6 \, dv_j\right)^{\frac1 3} \le C(r)
\]
by the $L^6$ bound on $w_j$ and the volume control coming from the $d_p$ convergence. Hence the functions $w_j$ have uniform bounds in $W^{1,2}_{\text{loc}}(M_j)$. It follows that 
$\tilde w_j$ has uniform $W^{1,q}_{\text{loc}}(\R^3)$ bounds 
for every $1 < q < 2$. Fix such a $q$ and assume that $q$ is sufficiently close to $2$. 
Passing to a subsequence if necessary, we can suppose that $\tilde w_j\to \tilde w$ for some function $\tilde w\colon \R^3\to \R$, where the convergence occurs weakly in $W^{1,q}$ and strongly in $L^{2}$ on every compact subset of $\R^3$. Passing to a further subsequence, we can suppose that $\tilde w_j \to \tilde w$ almost everywhere. 

We claim that $\|\tilde w\|_{L^\infty(B(0,4r))} = 1$ for all $r > 0$. In particular, $\tilde w$ is not the zero function. To see this, note that by dominated convergence and the $L^\infty$ bound, one has 
\[
\tilde w_j \to \tilde w \text{ in } L^s_{\text{loc}}(\R^3)
\]
for every $1 \le s < \infty$. Choose a cut-off function $\tilde \eta$ such that $\tilde \eta \equiv 1$ on $B(0,2r)$ and $\tilde \eta \equiv 0$ outside $B(0,4r)$. Let $\eta = \tilde \eta\circ \psi_j$. Then 
\begin{align*}
\|\tilde \eta \tilde w\|_{L^s(B(0,4r))} = \lim_{j\to \infty} \|\tilde \eta \tilde w_j\|_{L^s(B(0,4r))}.
\end{align*}
Lemma \ref{moser} implies that 
\begin{align*}
\|\tilde \eta \tilde w_j\|_{L^s(B(0,4r))} \ge \|\tilde \eta \tilde w_j\|_{L^{s\kappa_j}(\R^3)}^{\kappa_j} &\ge (1-\eps_j)^{\kappa_j} \|\eta w_j\|_{L^s(M_j)}^{\kappa_j} \\
&\ge (1-\eps_j)^{\kappa_j}  \|w_j\|_{L^s(\mathcal D_j(2r))}^{\kappa_j} \ge (1-\eps_j)^{\kappa_j}\left(\frac{c}{r^{\sigma}}\right)^{-\kappa_j/s}.
\end{align*}
Hence, sending $j\to\infty$, we deduce that 
\[
\|\tilde \eta \tilde w\|_{L^s(B(0,4r))} \ge \left(\frac{c}{r^\sigma}\right)^{-1/s}. 
\]
Now, letting $s\to \infty$, we deduce that 
\[
\|\tilde w\|_{L^\infty (B(0,4r))} \ge \|\tilde \eta \tilde w\|_{L^\infty(B(0,4r))} \ge 1.
\]
On the other hand, $\|\tilde w\|_{L^\infty(\R^3)}\le 1$ and the claim follows. 

Next, we claim that $\tilde w$ is a $W^{1,q}_{\text{loc}}$ weak solution of the equation 
\begin{equation}
\label{eq11}
\lap \tilde w + \frac 3 4 \tilde w^5 = 0
\end{equation}
in $\R^3$. It suffices to show that 
\begin{equation}
\label{eq10}
\int_{\R^3} -\grad \tilde w \cdot \grad \tilde u + \frac{3}{4} \tilde w^5 \tilde u\, dv = 0
\end{equation}
for all $\tilde u\in C^\infty_c(\R^3)$. So fix such a $ \tilde u$. We can suppose that $\tilde u$ is supported in $B(0,r)$. By the $W^{1,q}$ weak convergence and the strong $L^s$ convergence, we know that 
\[
\int_{\R^3} -\grad \tilde w \cdot \grad \tilde u + \frac{3}{4} \tilde w^5 \tilde u\, dv  = \lim_{j\to \infty} \int_{\R^3} -\grad \tilde w_j \cdot \grad \tilde u + \lambda_j \tilde w_j^5 \tilde u\, dv.
\] 
Now let $u_j = \tilde u \circ \psi_j$. Then, by the equation satisfied by $w_j$, one has 
\[
\int_{M_j} -g(\grad  w_j, \grad  u_j) + \lambda_j  w_j^5  u_j\, dv_j = 0. 
\]
Now by Lemma \ref{inner-prod}, it follows that 
\begin{align*}
&\left\vert \int_{\R^3} \grad \tilde w_j \cdot \grad \tilde u \, dv_{\text{euc}} - \int_{M_j} g(\grad  w_j, \grad  u_j)\, dv_{g_j} \right\vert \le \eps_j \|\grad \tilde w_j\|_{L^2(B(0,r))} \|\grad u\|_{L^{2\kappa_j}(B(0,r))} \to 0, \phantom{\int}
\end{align*}
as $j\to \infty$. 
Likewise, Lemma \ref{inner-prod2} implies that 
\[
\left\vert \int_{\R^3} \lambda_j \tilde w_j^5 \tilde u\, dv_{\text{euc}} - \int_{M_j} \lambda_j w_j^5 u_j\, dv_{g_j}\right\vert \le \eps_j \|\tilde w_j\|_{L^6(B(0,r))}^5 \|\tilde u\|_{L^{6\kappa_j}(B(0,r))} \to 0
\]
as $j\to \infty$.  Combining the previous four observations, we see that (\ref{eq10}) holds. By elliptic theory, it follows that $\tilde w$ is actually smooth. Combined with the fact that $\|\tilde w\|_{L^\infty(B(0,4r))}=1$ for all $r > 0$, we deduce that $\|\tilde w\|_{L^\infty(\R^3)} = 1$ and that $\tilde w(0) = 1$.

Next we claim that $\| \tilde w\|_{L^6(\R^3)}^6 \le 2\pi^2$. Suppose to the contrary that $\|\tilde w\|_{L^6(\R^3)}^6 \ge 2\pi^2 + 2\eta$. Then for large enough $j$ we have 
\[
\|\tilde w_j\|_{L^{6/\kappa_j}(B(0,r_j))}^{6/\kappa_j} \ge \|\tilde w_j\|_{L^6(B(0,r_j))}^6 \ge 2\pi^2 + \eta. 
\]
It follows that 
\[
\|w_j\|_{L^6(\Omega_j)} \ge \frac{1}{1+\eps_j} \|\tilde w_j\|_{L^{6/\kappa_j}(B(0,r_j))} \ge \frac{(2\pi^2 + \eta)^{\kappa_j/6}}{1+\eps_j}.
\]
After raising both sides to the sixth power, this contradicts the fact that $\|w_j\|_{L^6(M_j)}^6 = 2\pi^2$ for large $j$. 

Next we claim that $\grad w$ is globally in $L^2$. Indeed, multiply equation (\ref{eq11}) by $\eta^2 w$ where $\eta$ is a cut-off function to be specified later. This gives 
\begin{align*}
\int_{\R^3} \eta^2 \vert \grad w\vert^2\, dv &\le C\int_{\R^3} w^2\vert \grad \eta\vert^2 + \eta^2 w^6 \, dv \\
&\le C + C\left( \int_{\R^3} \vert \grad \eta\vert^3\, dv\right)^{2/3},
\end{align*}
where we used H\"older's inequality and the $L^6$ bound on $w$. Hence, selecting $r > 0$ and choosing $\eta$ so that $\eta\equiv 1$ on $B(0,r)$ and $\eta\equiv 0$ outside $B(0,2r)$ and $\vert \grad \eta\vert \le 2/r$ we get 
\[
\int_{B(0,r)} \vert \grad w\vert^2\, dv \le C
\]
where $C$ does not depend on $r$. Sending $r\to \infty$ gives the claim. 

Now multiply equation (\ref{eq11}) by $\tilde w$ and integrate to get 
\[
\int_{\R^3} \vert \grad \tilde w\vert^2 \, dv = \frac{3}{4} \int_{\R^3} \tilde w^6 \, dv,
\]
and therefore, by the Euclidean $L^2$ Sobolev inequality, that 
\begin{equation}
\label{eq12}
\Lambda \le \frac{\int_{\R^3} \vert \grad \tilde w\vert^2\, dv}{\left(\int_{\R^3} \tilde w^6\, dv\right)^{1/3}} = \frac{3}{4} \left(\int_{\R^3} \tilde w^6\, dv\right)^{2/3}. 
\end{equation}
It follows that 
\[
\|\tilde w\|_{L^6(\R^3)}^6 \ge \left(\frac{4\Lambda}{3}\right)^{3/2} = 2\pi^2,
\]
and therefore 
\[
\|\tilde w\|_{L^6(\R^3)}^6 = 2\pi^2
\]
and equality holds in (\ref{eq12}). Thus $\tilde w$ is a minimizer for the $L^2$ Sobolev inequality on $\R^3$. Such minimizers are classified (see \cite{lee1987yamabe} Section 3), and the only minimizer with 
\[
\|\tilde w\|_{L^6(\R^3)}^6 = 2\pi^2  \quad \text{and}\quad \|\tilde w\|_{L^\infty(\R^3)} = 1 \quad \text{and} \quad \tilde w(0) = 1 
\]
is
\[
\tilde w(x) =  \sqrt{\frac{4}{4+\vert x\vert^2}}.
\]
Since the limit $\tilde w$ does not depend on the subsequence chosen at the beginning, it follows that the full sequence converges to $\tilde w$. This completes the proof. 
\end{proof}

\bibliographystyle{plain}
\bibliography{reference}

\end{document}